\theoremstyle{plain}
\newtheorem{thm}{Theorem}[section]
\newtheorem{prop}[thm]{Proposition}
\newtheorem{lem}[thm]{Lemma}
\theoremstyle{definition}
\newtheorem{defn}[thm]{Definition}
\newtheorem{eg}[thm]{Example}
\theoremstyle{remark}
\newtheorem{rem}[thm]{Remark}
\newtheorem{claim}[thm]{Claim}
\newcommand{\bC}{\ensuremath{\mathbb{C}}}
\newcommand{\bP}{\ensuremath{\mathbb{P}}}
\newcommand{\bZ}{\ensuremath{\mathbb{Z}}}
\newcommand{\cL}{\ensuremath{\mathcal{L}}}
\newcommand{\cN}{\ensuremath{\mathcal{N}}}
\newcommand{\cO}{\ensuremath{\mathcal{O}}}
\newcommand{\cX}{\ensuremath{\mathcal{X}}}
\DeclareMathOperator{\Pic}{Pic}
\DeclareMathOperator{\Sing}{Sing}
\DeclareMathOperator{\Ker}{Ker}
\DeclareMathOperator{\Image}{Im}
\DeclareMathOperator{\id}{id}
\DeclareMathOperator{\pr}{pr}
\DeclareMathOperator{\rk}{rk}
\begin{document}

\title
[Non-K\"{a}hler Calabi-Yau manifolds]
{Examples of non-K\"{a}hler Calabi-Yau manifolds with arbitrarily large $b_2$}

\author{Taro Sano}
\address{Department of Mathematics, Faculty of Science, Kobe University, Kobe, 657-8501, Japan}
\email{tarosano@math.kobe-u.ac.jp}

\maketitle

\begin{abstract} 
	We construct non-K\"{a}hler Calabi-Yau manifolds of dimension $\ge 4$ with arbitrarily large 2nd Betti numbers by smoothing normal crossing varieties. 
	The examples have K3 fibrations over smooth projective varieties and their algebraic dimensions are of codimension $2$.

		\end{abstract}

\tableofcontents

\section{Introduction} 
In this paper, we say that a compact complex manifold $X$ is a {\it Calabi-Yau manifold} if its canonical bundle $\omega_X \simeq \cO_X$ and 
$H^i(X, \cO_X) = H^0(X, \Omega_X^i)=0$ for $0<i < \dim X$. Moreover, we say that $X$ is a {\it Calabi-Yau $n$-fold} if its dimension is $n$.  

Projective Calabi-Yau manifolds form an important class of algebraic varieties. 
Non-K\"{a}hler Calabi-Yau manifolds are well investigated in complex differential geometry including those without the condition on the cohomology groups (cf. \cite{MR2891478}, \cite{MR3372471}). 
Reid's fantasy \cite{MR909231} suggests the possible importance of non-K\"{a}hler Calabi-Yau manifolds in the classification of projective ones. 

One of the important problems on projective Calabi-Yau manifolds is whether their topological types are finite or not. 
Inspired by these background, we construct infinitely many non-K\"{a}hler Calabi-Yau manifolds with the following properties. 

\begin{thm}\label{thm:main}
For positive integers $m$ and $N \ge 4$, there exists a simply connected non-K\"{a}hler Calabi-Yau $N$-fold $X=X(m)$ such that 
\[
b_2(X) = \begin{cases}m+10 & N=4 \\ 
m+2 & N \ge 5 
\end{cases}, \ \ a(X)=N-2, 
\]
where $b_2(X)$ is the 2nd Betti number 
 and $a(X)$ is the algebraic dimension of $X$. The topological Euler number $e(X)$ can also be computed (Proposition \ref{prop:pi1Euler}). 
\end{thm}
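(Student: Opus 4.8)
The plan is to realize $X$ as the smoothing of a normal crossing variety that is fibered, over a fixed smooth projective base $B$ with $\dim B = N-2$, by Type II degenerations of K3 surfaces. I would take two families $\pi_i\colon V_i \to B$ ($i=1,2$) whose fibers are rational surfaces carrying an effective anticanonical elliptic curve, together with a common relative anticanonical divisor $\cD \to B$ whose fibers are elliptic curves, embedded as $\cD \subset V_i$, and set $X_0 := V_1 \cup_{\cD} V_2$. Fiberwise, $(V_1)_b \cup_{\cD_b} (V_2)_b$ is exactly a Type II degenerate K3 surface. The arbitrarily large second Betti number is engineered through the base: taking $B$ to be an iterated blow-up of $\bP^{N-2}$ (or a product with a surface of large $b_2$) makes $b_2(B)$ grow linearly in $m$ while keeping $B$ simply connected, and the K3 fiber will contribute only a bounded constant to $b_2(X)$.

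First I would verify the two conditions needed to smooth $X_0$ to a Calabi-Yau. The triviality $\omega_{X_0} \simeq \cO_{X_0}$ amounts to $\cD \in |-K_{V_i}|$ for each $i$, i.e.\ the log Calabi-Yau condition on each pair $(V_i,\cD)$; and Friedman's $d$-semistability $N_{\cD/V_1}\otimes N_{\cD/V_2}\simeq \cO_{\cD}$ I would arrange by choosing the two normal bundles inverse to one another along the family. Granting these, the smoothing theorem of Kawamata--Namikawa produces a semistable degeneration $\cX \to \Delta$ with central fiber $X_0$ and smooth general fiber $X$ satisfying $\omega_X \simeq \cO_X$. Since the smoothing is fiberwise over $B$, the fibration structure survives, so $X \to B$ is a K3 fibration, and this will turn out to be the algebraic reduction. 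The remaining Calabi-Yau conditions $H^i(X,\cO_X)=0$ and $H^0(X,\Omega_X^i)=0$ for $0<i<N$ I would read off from the limiting mixed Hodge structure of the degeneration (Clemens--Schmid), reducing them to the analogous vanishing on the building blocks $V_i$ and $\cD$, which hold by construction; and $\pi_1(X)=1$ follows from the homotopy exact sequence of $X \to B$ combined with van Kampen applied to $X_0 = V_1 \cup_{\cD} V_2$, since the K3 fibers and the chosen $B$ are simply connected and the gluing locus $\cD$ is connected.

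For the two numerical invariants, the Leray spectral sequence of $\pi\colon X \to B$ gives $b_2(X) = b_2(B) + \dim H^2(F,\bR)^{\Gamma} + (\text{discriminant corrections})$, where $F$ is a general fiber and $\Gamma$ the global monodromy; the $R^1\pi_*\bR$ term vanishes generically because $b_1(\mathrm{K3})=0$. Thus $b_2(X) = m + c$, with $m = b_2(B)$ and $c$ a finite computation on the base, whose value ($2$ for $N\ge 5$ and $10$ for $N=4$) reflects the extra Leray/discriminant contributions present only when the base is a surface. For the algebraic dimension, $a(X)\ge N-2$ because pulling back a transcendence basis of the function field of the projective base $B$ gives $N-2$ algebraically independent meromorphic functions; for the reverse inequality I would show that the general K3 fiber $F$ has algebraic dimension $0$, so that any meromorphic function on $X$ is constant on general fibers and hence descends to $B$, whence $a(X)=a(B)=N-2$.

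The crux, and the step I expect to be the main obstacle, is the non-K\"ahlerness, which I would deduce from the same input that forces $a(F)=0$. If $X$ were K\"ahler, restricting a K\"ahler class to a general fiber $F$ would produce a real $(1,1)$-class of strictly positive self-intersection; but this restriction is monodromy-invariant (a fixed global class is parallel under Gauss--Manin), so it lies in $H^2(F,\bR)^{\Gamma}$. It therefore suffices to arrange that $H^2(F,\bR)^{\Gamma}$ is negative definite: a single such condition makes the invariant N\'eron--Severi lattice negative definite, forcing $a(F)=0$ (hence $a(X)=N-2$), and simultaneously forbids any positive invariant class, contradicting the existence of a K\"ahler class. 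The difficulty is that the monodromy-invariant cohomology of a single Type II degeneration still contains classes of positive self-intersection; so I must enlarge the global monodromy group $\Gamma$ --- by spreading several Type II and nodal degenerate fibers over a positive-dimensional discriminant in $B$ and computing their joint invariants --- until the common invariant subspace of $H^2(F,\bR)$ becomes negative definite, all while keeping the total space $\cX$ smooth and preserving the $d$-semistability and Calabi-Yau conditions above. Matching this monodromy requirement to explicit gluing data for $X_0$ is the technical heart of the argument.
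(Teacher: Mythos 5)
Your overall frame (an SNC variety fibered in Type~II degenerate K3 surfaces over a projective base, smoothed by Kawamata--Namikawa, with $b_2$ via Leray/Mayer--Vietoris and $\pi_1$ via van Kampen) is parallel to the paper's, but the step you yourself flag as the crux --- forcing the monodromy-invariant part of $H^2(F,\bR)$ to be negative definite --- is not an argument, and as set up it cannot be completed. With the gluing you describe (a common divisor $\cD\subset V_i$, normal bundles arranged inverse to one another, no twisting of the identification), nothing prevents the smoothing from being \emph{projective}: constructions of exactly this shape are how Kawamata--Namikawa and Lee produce projective Calabi--Yau manifolds. And if the smoothing is projective, the restriction of an ample class to a fiber is a monodromy-invariant class of positive square, so the negative-definiteness you want is then literally impossible to arrange. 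In other words, the asymmetry that obstructs K\"ahlerness must be built into the gluing isomorphism itself; it cannot be ``matched to the gluing data'' afterwards by accumulating degenerate fibers. This missing ingredient is precisely the paper's key idea: $X_2=\bP^2\times T$ is glued to a blow-up $X_1$ of $\bP^2\times T$ along the Schoen-type Calabi--Yau $S\times_{\bP^1}T$, but through $\phi_m\times\id$, where $\phi_m\colon S\to S_m$ is a composite of quadratic transformations of the rational elliptic surface $S$ (Proposition~\ref{prop:quadraticTrans}). Non-projectivity and $a(X)=N-2$ are then detected not by monodromy but by specializing line bundles: $\Pic X\simeq\Pic\cX\simeq\Pic X_0$, and Lemma~\ref{lem:X0linebundle} shows any line bundle on $X_0$ effective on both components has vanishing $\bP^2$-degree, because $[H]$ and $[\phi_m^*H_{S_m}]$ are linearly independent in $\Pic S/\bZ K_S$; hence $\kappa\le N-2$, and non-K\"ahlerness follows from non-projectivity together with $H^2(X,\cO_X)=0$.

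Two further gaps. First, even granting a negative-definite $\Gamma$-invariant subspace, your deduction $a(F)=0$ for the general fiber does not follow: the N\'eron--Severi group of a very general fiber need only be invariant under a finite-index subgroup of $\Gamma$ (monodromy may permute algebraic classes), so you would need definiteness of the invariant part of every finite-index subgroup, which your sketch does not address. Second, your $d$-semistability is asserted (``choose the normal bundles inverse to one another'') rather than constructed; in the paper this is exactly the delicate point, and it is where the Betti number comes from: after twisting by $\phi_m$, triviality of $\cN_{D_1/Y_1}\otimes\Phi_m^*\cN_{D_2/Y_2}$ forces blowing up $X_1$ along $m+1$ specific centers $F_1,\dots,F_m,\Gamma_m$, giving $b_2(X)=m+\rho_T$ (Proposition~\ref{prop:X_mproperty}). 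So in the paper the twist, the $d$-semistability blow-ups, and the growth of $b_2$ are one linked mechanism, whereas in your proposal the large $b_2$ (blow-ups of the base $B$) is disconnected from, and does nothing toward, the non-K\"ahler property.
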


Since the 2nd Betti number of $X$ can be arbitrarily large, the examples give infinitely many topological types of non-K\"{a}hler Calabi-Yau manifolds of dimension $N \ge 4$. 
As far as we know, these are the first examples of Calabi-Yau manifolds with infinitely many topological types in 
a fixed dimension $N \ge 4$ in our sense. 

The examples suggest that  
there are very plenty of non-K\"{a}hler Calabi-Yau manifolds which are close to projective ones even in dimension $>3$. 
Note that direct products of known lower-dimensional Calabi-Yau manifolds  have non-zero holomorphic forms and so are not  themselves Calabi-Yau.

Calabi-Yau 3-folds with arbitrarily large $b_3$ and $b_2 =0$ are constructed by Clemens and Friedman (cf.\cite{MR1141199}, \cite{MR1410077}). 
Hashimoto and the author constructed non-K\"{a}hler Calabi-Yau 3-folds with arbitrarily large $b_2$ by smoothing normal crossing varieties in \cite{Hashimoto:aa}. 
 The idea of constructing projective Calabi-Yau manifolds by smoothing SNC varieties goes back to the papers \cite{MR1296351} and \cite{MR2658406}. In this paper, we construct examples by the same method as  \cite{Hashimoto:aa}.


\subsection{Comments on the construction} 
 The idea of the construction in \cite{Hashimoto:aa} was to consider distinct SNC varieties by gluing smooth varieties along their anticanonical divisors through an automorphism of infinite order. The point is that we use smooth varieties which are blow-ups of other varieties and the number of blow-up centers increases when we change the gluing isomorphisms. 
 
In this paper, we consider an SNC variety of the form $X_0 = X_1 \cup X_2$, where $X_1=\bP^2 \times T$ and $X_2$ is a blow-up of $X_1$ and $T \subset \bP^1 \times \bP^n$ is a hypersurface of bi-degree $(1, n+1)$.  
The essential ingredient in this paper is the Schoen type Calabi-Yau manifold with infinite automorphisms (cf. \cite{MOF2020}, \cite{MR923487}, \cite{MR1672045}). 
When $n=2$, the intersection of two irreducible components is the Schoen Calabi-Yau 3-fold which is a fiber product of two rational elliptic surfaces over $\bP^{1}$. 
We actually use isomorphisms of such Calabi-Yau ($N-1$)-folds which come from quadratic transformations on rational elliptic surfaces. 
We effectively use the description of general rational elliptic surfaces 
as hypersurfaces of $\bP^2 \times \bP^1$ of bidegree $(3,1)$. 

\begin{rem}
In  \cite{Hashimoto:aa}, examples of SNC Calabi-Yau 3-folds are constructed by using automorphisms of $(2,2,2)$-hypersurface $X_{(2,2,2)} \subset \bP^1 \times \bP^1 \times \bP^1$ induced by the covering involutions of double covers $ X \rightarrow \bP^1 \times \bP^1$. 
Note that, for $X=X_{(2, \ldots, 2)} \subset (\bP^1)^n$, the covering involutions 
of projections $X \rightarrow (\bP^1)^{n-1}$ are birational maps with indeterminacies when $n \ge 4$, 
thus we need new gluing isomorphisms to construct examples in higher dimensions. 
It is also not clear that the construction of Clemens--Friedman can be generalized to higher dimensions. 
 \end{rem}	

After finishing the manuscript, the author received an e-mail from Nam-Hoon Lee and he constructed a non-K\"{a}hler Calabi-Yau 4-fold by smoothing SNC varieties \cite{Lee:2021aa}. 	
	
 \section{Preliminaries}	

We can construct an SNC variety by gluing two smooth proper varieties along their smooth divisors as follows. 

\begin{prop}\label{prop:gluingSNC}
Let $X_1$ and $X_2$ be a smooth proper varieties and $D_i \subset X_i$ be a smooth divisor for $i=1,2$ 
with an isomorphism $\phi \colon D_1 \rightarrow D_2$. 
Then there exists an SNC variety $X_0$ with a closed immersion $\iota_i \colon X_i \hookrightarrow X_0$ for $i=1,2$ in the Cartesian diagram 
\[
\xymatrix{
D_1 \ar[d]_{i_2 \circ \phi} \ar[r]^{i_1} & X_1 \ar[d]^{\iota_1} \\ 
X_2 \ar[r]^{\iota_2} & X_0
}, 
\] 
and we write $X_0=: X_1 \cup^{\psi} X_2$. 

Moreover, if $D_1$ is connected and $D_i \in |{-}K_{X_i}|$ for $i=1,2$, then we have $\omega_{X_0} \simeq \cO_{X_0}$. 
\end{prop}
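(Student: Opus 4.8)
The plan is to construct $X_0$ as the pushout (pinching) of the diagram $X_1 \xleftarrow{\,i_1\,} D_1 \xrightarrow{\,i_2\circ\phi\,} X_2$ in the category of schemes, and then to exhibit a nowhere–vanishing global section of its dualizing sheaf. For the first assertion I would invoke that pushouts along closed immersions exist in this generality (gluing of schemes along a common closed subscheme, as in Ferrand's pinching or Schwede's gluing construction): the underlying space is the topological gluing of $X_1$ and $X_2$ along the identification $\phi\colon D_1\to D_2$, and the structure sheaf is the fibre product $\cO_{X_1}\times_{\cO_D}\cO_{X_2}$, where $D$ denotes the common image of $D_1$ and $D_2$. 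Such a square is simultaneously cocartesian and Cartesian, the Cartesian property being exactly that the scheme-theoretic intersection $\iota_1(X_1)\cap\iota_2(X_2)$ equals $D$. I would then verify the SNC property by the local model: since each $D_i$ is a smooth Cartier divisor in the smooth $X_i$, near a point of $D$ one can choose coordinates with $D_i=\{x_i=0\}$ and $\phi$ matching the transverse coordinates, so that the completed local ring becomes $\bC[[x_1,x_2,u_1,\dots,u_{N-2}]]/(x_1x_2)$, the standard crossing of two smooth branches. Properness of $X_0$ follows because the normalization $X_1\sqcup X_2\to X_0$ is finite and surjective with proper source.

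For the statement on $\omega_{X_0}$, the first point is that $X_0$ is locally the hypersurface $\{x_1x_2=0\}$ in a smooth ambient space, hence a local complete intersection and in particular Gorenstein, so $\omega_{X_0}$ is an invertible dualizing sheaf. Adjunction on each branch then yields the key identification
\[
\omega_{X_0}\big|_{X_i}\cong\omega_{X_i}(D_i),\qquad i=1,2,
\]
which I would read off from the local divisorial relation $\{x_1x_2=0\}=\{x_1=0\}+\{x_2=0\}$ combined with adjunction for each smooth component. Globally these fit into the residue exact sequence
\[
0\to\omega_{X_0}\to\iota_{1*}\bigl(\omega_{X_1}(D_1)\bigr)\oplus\iota_{2*}\bigl(\omega_{X_2}(D_2)\bigr)\xrightarrow{\ \mathrm{res}_1-\mathrm{res}_2\ }\iota_{D*}\omega_D\to0,
\]
where $\iota_D\colon D\hookrightarrow X_0$ is the double locus and $\mathrm{res}_i$ is the Poincar\'e residue; thus a global section of $\omega_{X_0}$ is precisely a pair $(s_1,s_2)$ of sections of $\omega_{X_i}(D_i)$ whose residues along $D$ agree under $\phi$, up to the sign fixed by the chosen orientation convention.

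Now the hypothesis $D_i\in|{-}K_{X_i}|$ gives $\omega_{X_i}(D_i)\cong\cO_{X_i}$, so I may pick a nowhere–vanishing section $s_i\in H^0(X_i,\omega_{X_i}(D_i))$, unique up to scalar since $X_i$ is smooth, proper and connected. By the adjunction isomorphism $\omega_{X_i}(D_i)|_{D_i}\cong\omega_{D_i}$, the residue $\mathrm{res}_i(s_i)$ is a nowhere–vanishing section of $\omega_D$. Because $D$ is connected and proper, the quotient $\mathrm{res}_1(s_1)/\mathrm{res}_2(s_2)$ is a nowhere–vanishing element of $H^0(D,\cO_D)=\bC$, hence a nonzero constant $c$; after rescaling $s_2$ by the appropriate multiple of $c$ (absorbing the sign) the two residues coincide, so $(s_1,s_2)$ lifts to a global section $s$ of $\omega_{X_0}$. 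This $s$ is nowhere vanishing, being nowhere vanishing on each component, and therefore defines the desired isomorphism $\cO_{X_0}\xrightarrow{\ \sim\ }\omega_{X_0}$.

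The gluing construction and the local SNC check are routine; the substance of the argument is the canonical-bundle computation, and I expect the main obstacle to be setting up the residue sequence with the correct signs and pinning down exactly where the connectedness of $D$ is needed. The latter is decisive: it is precisely connectedness that forces the two residues to differ by a single \emph{global} constant rather than a function constant on each component, which is what allows a single rescaling of $s_2$ to achieve the matching condition everywhere at once.
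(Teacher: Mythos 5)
Your proof is correct, and it is worth noting that the paper itself gives no argument for this proposition: it simply cites \cite[Proposition 2.1, Corollary 2.2]{Hashimoto:aa} ``and references therein.'' What you have written is a self-contained version of exactly the argument that citation outsources: existence of the pushout along closed immersions (Ferrand/Schwede pinching, with the square both cocartesian and Cartesian), the formal-local model $x_1x_2=0$ giving the SNC and Gorenstein properties, the residue exact sequence
\[
0\to\omega_{X_0}\to\iota_{1*}\bigl(\omega_{X_1}(D_1)\bigr)\oplus\iota_{2*}\bigl(\omega_{X_2}(D_2)\bigr)\to\iota_{D*}\omega_D\to0,
\]
and the matching of residues of the trivializing sections $s_i$ of $\omega_{X_i}(D_i)\cong\cO_{X_i}$. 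Your identification of where connectedness of $D$ enters is exactly right: without it the ratio of the two residues is only locally constant, and a single rescaling of $s_2$ cannot fix the matching on all components at once. Two small points to tidy up. First, an indexing slip: if $\dim X_i = N$ then $\dim D = N-1$, so the completed local ring at a point of the double locus is $\bC[[x_1,x_2,u_1,\dots,u_{N-1}]]/(x_1x_2)$, not with $u_1,\dots,u_{N-2}$. Second, you should make explicit why the glued section $s$ generates $\omega_{X_0}$ at points of $D$ (not merely on each branch separately): writing $s=f\omega_0$ for a local generator $\omega_0$ of $\omega_{X_0}$, the restrictions $f|_{X_i}$ are units agreeing on $D$, hence $f$ is a unit in $\cO_{X_0}$; this one-line verification closes the argument that $s$ trivializes $\omega_{X_0}$ rather than just its pullback to the normalization.
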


\begin{proof}
See \cite[Proposition 2.1, Corollary 2.2]{Hashimoto:aa} and references therein. 
\end{proof}

\begin{defn}\label{defn:d-semistable}
Let $X$ be an SNC variety and $X= \bigcup_{i=1}^N X_i$ be the decomposition into its irreducible components. Let $D:= \Sing X = \bigcup_{i \neq j} (X_i \cap X_j)$ be the double locus and let  $I_{X_i}, I_D \subset \cO_X$ be the ideal sheaves of 
$X_i$ and $D$ on $X$. Let 
\[
\cO_D(X):= (\bigotimes_{i=1}^N I_{X_i} / I_{X_i} I_D)^{\ast} \in \Pic D
\]  
be the {\it infinitesimal normal bundle}  as in \cite[Definition 1.9]{MR707162}. 

We say that $X$ is {\it $d$-semistable} if $\cO_D(X) \simeq \cO_D$. 
If $X= X_1 \cup X_2$ for smooth varieties $X_1$ and $X_2$, then $\cO_D(X) \simeq \cN_{D/X_1} \otimes \cN_{D/X_2}$, where $\cN_{D/X_i}$ is the normal bundle of $D \subset X_i$ for $i=1,2$. 
\end{defn}

The following result on smoothings of an SNC variety is essential for the construction. 

\begin{thm}\label{thm:smoothingSNCCY}
Let $X$ be an $n$-dimensional proper SNC variety whose dualizing sheaf $\omega_X$ is trivial. 
Assume that $X$ is d-semistable.  
Then there exists a semistable smoothing $\phi \colon \cX \rightarrow \Delta^1$ of $X$ over a unit disk, that is, a proper surjective morphism such that $\cX$ is smooth, $X \simeq \phi^{-1}(0)$ and $\cX_t := \phi^{-1}(t)$ is smooth for $t \neq 0$.   
\end{thm}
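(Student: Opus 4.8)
The plan is to realize the smoothing through the logarithmic deformation theory of Kawamata and Namikawa, promoting the numerical hypothesis of d-semistability into the geometric input needed for a log smooth family. First I would use d-semistability to equip $X$ with a global logarithmic structure making it log smooth over the standard log point $(\{0\}, \bN)$. Locally $X$ looks like a union of coordinate hyperplanes, which carries a canonical log smooth structure; the content of d-semistability, namely the triviality $\cO_D(X) \simeq \cO_D$, is exactly the gluing datum that patches these local log structures into a coherent global one. This is Friedman's criterion recast in Kato's logarithmic language, and it is the step in which the d-semistability hypothesis is used in an essential way.

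Next I would analyze the logarithmic deformations of the resulting log smooth scheme $X^{\dagger}$. Infinitesimal log deformations are classified by $H^1(X, \Theta_X^{\dagger})$ and their obstructions lie in $H^2(X, \Theta_X^{\dagger})$, where $\Theta_X^{\dagger}$ denotes the sheaf of logarithmic derivations. The hypothesis $\omega_X \simeq \cO_X$ provides a trivialization of the top logarithmic differentials, hence a duality identifying $\Theta_X^{\dagger}$ with $\Omega_X^{n-1,\dagger}$ and linking the deformation cohomology to the logarithmic Hodge cohomology of $X$.

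The crux is to prove that this log deformation functor is unobstructed. I would establish this via the $T^1$-lifting principle of Ran, Kawamata and Deligne: it suffices to verify that infinitesimal log deformations lift one order further at each stage, and this reduces to the $E_1$-degeneration of the Hodge--de Rham spectral sequence for the logarithmic de Rham complex of $X^{\dagger}$. This degeneration is the Hodge-theoretic heart of the argument and is the main obstacle; it is where one invokes that the irreducible components of $X$ are projective (hence Kähler), so that the relevant Hodge filtration is strict and the spectral sequence degenerates. Granting unobstructedness, the formal log smooth deformation converges to an analytic family $\phi \colon \cX \to \Delta^1$.

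Finally I would check that a general direction in the resulting smooth, positive-dimensional deformation space yields an honest semistable smoothing. The standard log point base corresponds to $(\Delta^1, 0)$ with reduced central fiber $X$, and log smoothness of $\cX^{\dagger} \to (\Delta^1)^{\dagger}$ together with d-semistability forces $\cX$ to be smooth, with $\cX_t$ smooth for $t \neq 0$. This last step is the translation of log smoothness over the standard log point back into the classical notion of a semistable degeneration, completing the construction of $\phi$.
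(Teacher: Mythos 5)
Your proposal is, in essence, a reconstruction of the proof of Kawamata--Namikawa \cite[Theorem 4.2]{MR1296351}, which is one of the two results the paper itself cites here (the paper's ``proof'' of Theorem \ref{thm:smoothingSNCCY} is nothing but that citation together with \cite[Corollary 7.4]{Chan:2019vv}). So the strategy is the right one in spirit, but as sketched it proves a strictly weaker statement than the one asserted, and the paper's own remark immediately after the theorem flags exactly this point. The $T^1$-lifting route of Kawamata--Namikawa requires two additional cohomological hypotheses, $H^{n-1}(X,\cO_X)=0$ and $H^{n-2}(X_i,\cO_{X_i})=0$ for every irreducible component $X_i$, neither of which appears in the statement you were asked to prove. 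These hypotheses are not decorative, and your sketch never addresses how to remove them. To see one concrete place where care is needed: since $\omega_X \simeq \cO_X$, Serre duality gives $H^{n-1}(X,\cO_X) \simeq H^1(X,\cO_X)^{\ast}$, and without this vanishing the relative log canonical bundle $\omega^{\dagger}_{\cX_n/A_n}$ of an infinitesimal log deformation is a priori only a (possibly non-trivial) deformation of $\cO_X$; then the identification $\Theta^{\dagger}_X \simeq \Omega^{n-1,\dagger}_X$ that powers your $T^1$-lifting step need not persist over the Artin rings where the lifting criterion must be verified.

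A second hypothesis is imported without justification: you invoke projectivity (K\"ahlerness) of the irreducible components to obtain the $E_1$-degeneration of the logarithmic Hodge--de Rham spectral sequence, but the theorem assumes only that $X$ is \emph{proper}, so this Hodge-theoretic input is not available by classical means under the stated hypotheses. Removing both the K\"ahler-type assumption and the extra cohomological vanishings is precisely the content of the later works cited in the paper: Chan--Leung--Ma \cite{Chan:2019vv} replace $T^1$-lifting by an unobstructedness theorem for the Maurer--Cartan equation of a singular dgBV algebra, and Felten--Filip--Ruddat \cite{Felten:2019ve}, \cite{Felten:2020vm} prove the required degeneration for proper log toroidal families by reduction modulo $p$. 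As written, your argument establishes the Kawamata--Namikawa version of the theorem---which, as the paper's remark notes, does suffice for the application, since the SNC varieties of Example \ref{eg:construction} are unions of rational manifolds glued along a Calabi--Yau manifold and hence satisfy the extra vanishings---but it does not prove Theorem \ref{thm:smoothingSNCCY} as stated. Closing the gap requires either adding those hypotheses to the statement or importing the machinery of \cite{Chan:2019vv} or \cite{Felten:2019ve}.
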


\begin{proof}
This follows from \cite[Theorem 4.2]{MR1296351}, \cite[Corollary 7.4]{Chan:2019vv}. 
\end{proof}

\begin{rem}
For the construction of our examples, \cite[Theorem 4.2]{MR1296351} is enough although they assume that $H^{n-1}(X, \cO_X) =0$ and $H^{n-2}(X_i, \cO_{X_i}) =0$ for all irreducible component $X_i$ of $X$. 
Indeed, we can check that the SNC variety $X_0(m)$ as in Example \ref{eg:construction} satisfies the conditions since it is a union of two rational manifolds glued along a Calabi-Yau manifold. 

In \cite{Felten:2019ve} and \cite{Felten:2020vm}, generalizations of Theorem \ref{thm:smoothingSNCCY} are studied. 
\end{rem}

We shall use the following description of general rational elliptic surfaces as hypersurfaces in $\bP^2 \times \bP^1$.

\begin{prop}\label{prop:hypersurfaceP2xP1}
Let $S \subset \bP^2 \times \bP^1$ be a general hypersurface of bidegree $(3,1)$, that is a member of $|p_1^*\cO_{\bP^2}(3) \otimes p_2^*\cO_{\bP^1}(1)|$, where $p_1 \colon S \rightarrow \bP^2$ and $p_2 \colon S \rightarrow \bP^1$ are the projections. 
\begin{enumerate}
\item[(i)] $S$ is a rational elliptic surface with no $(-2)$-curve. Moreover, $p_1$ induces an anticanonical elliptic fibration 
and $p_2$ is the blow-up at 9 points which appear as intersection of two cubic curves.  
\item[(ii)] Let $C \subset S$ be an irreducible curve which is not a $(-1)$-curve. 
Then $C^2 \ge 0$ and $|C|$ is base point free. 
\end{enumerate}
\end{prop}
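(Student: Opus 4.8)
The plan is to make $S$ explicit from its equation and then read both structures off the two projections. Write the bihomogeneous equation of $S$ as $s\,F_0(x)+t\,F_1(x)=0$ with $[s:t]\in\bP^1$, $x\in\bP^2$ and $F_0,F_1$ cubic forms. First I would note that $\cO_{\bP^2}(3)\boxtimes\cO_{\bP^1}(1)$ is very ample on $\bP^2\times\bP^1$, so a general member $S$ is smooth by Bertini. The projection $p_2\colon S\to\bP^1$ has fibers the cubic curves $\{sF_0+tF_1=0\}$ of the pencil spanned by $F_0,F_1$; for general $F_0,F_1$ the general such curve is integral of arithmetic genus one, so $p_2$ is an elliptic fibration, and since each fiber is the strict transform of a member of $|{-}K_{\bP^2}|$ it lies in $|{-}K_S|$, i.e. the fibration is anticanonical. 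The projection $p_1\colon S\to\bP^2$ is an isomorphism away from the base locus $\{F_0=F_1=0\}$, which for a general pencil is nine reduced points (B\'ezout), with a $\bP^1$ over each; hence $p_1$ exhibits $S$ as $\mathrm{Bl}_9\bP^2$. In particular $S$ is rational with $K_S^2=0$, and $-K_S\equiv F$ (the class of a fiber of $p_2$) is nef and base point free.

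For the absence of $(-2)$-curves I would use generality once more: a general pencil of plane cubics is a Lefschetz pencil, so its singular members are integral one-nodal cubics, consistently with $e(S)=e(\mathrm{Bl}_9\bP^2)=12$ counting twelve such fibers. Thus every fiber of $p_2$ is integral. If $C$ were a $(-2)$-curve, adjunction gives $C\cdot(-K_S)=C\cdot F=0$, so $C$ is a component of a fiber; integrality then forces $C$ to be a whole fiber, contradicting $C^2=-2\neq 0$. Hence $S$ has no $(-2)$-curve.

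For (ii), let $C$ be irreducible and not a $(-1)$-curve. Since $-K_S=F$ is nef, $C\cdot F\ge 0$. If $C\cdot F=0$ then $C$ lies in an integral fiber, so $C\equiv F$ and $C^2=0$. Otherwise adjunction gives $C^2=2p_a(C)-2+C\cdot F\ge -1$, with equality exactly when $p_a(C)=0$ and $C\cdot F=1$, i.e. when $C$ is a $(-1)$-curve; excluding that case yields $C^2\ge 0$. Being irreducible with $C^2\ge 0$, $C$ meets every other irreducible curve nonnegatively, so $C$ is nef. I also record that $C\cdot F\neq 1$ here: an irreducible degree-one multisection is, after normalization, the image of a section of $p_2$, which is a closed immersion, so $C$ would be a smooth rational section, hence a $(-1)$-curve — excluded. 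Therefore $C\cdot F\in\{0\}\cup\{n:n\ge 2\}$.

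It remains to show $|C|$ is base point free, which is the crux. If $C\cdot F=0$ then $|C|=|{-}K_S|$ is the base point free elliptic pencil. If $C^2=0$ and $C\cdot F\ge 2$, then $C$ is irreducible with $h^0(C)\ge\chi(C)=1+\tfrac12 C\cdot F\ge 2$, so $|C|$ has no fixed component and its base locus is finite of length at most $C^2=0$; hence it is base point free. Finally, if $C^2>0$ (so $C$ is nef and big) and $C\cdot F\ge 2$, I would restrict to fibers: in $0\to\cO_S(C-F_0)\to\cO_S(C)\to\cO_{F_0}(C|_{F_0})\to 0$, Kawamata--Viehweg vanishing gives $H^1(S,C-F_0)=H^1(S,K_S+C)=0$, so $H^0(S,C)\to H^0(F_0,C|_{F_0})$ is surjective; as $\deg(C|_{F_0})=C\cdot F\ge 2$ on the integral arithmetic-genus-one curve $F_0$, the restricted system is base point free, so $|C|$ has no base point on $F_0$, and the fibers cover $S$. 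The hard part is exactly this base point analysis, whose two essential inputs are the exclusion of the case $C\cdot F=1$ (sections are embedded) and the vanishing $H^1(S,K_S+C)=0$ that lets the fiberwise sections lift.
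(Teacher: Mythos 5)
Your proof is correct, and while it follows the same overall blueprint as the paper's (explicit equation $sF_0+tF_1=0$ and integrality of the general cubic pencil for (i); adjunction plus restriction to fibers with vanishing for (ii)), your treatment of (ii) is genuinely more careful and in one place repairs an actual flaw in the paper's argument. The paper asserts that a horizontal irreducible curve $C$ (i.e.\ $p_2(C)=\bP^1$) which is not a $(-1)$-curve must have $C^2>0$; this is false as stated: the strict transform of a general line through one of the nine base points has class $H-E_i$, is irreducible and horizontal, is not a $(-1)$-curve, and has $C^2=0$. Your case split by $C\cdot F$ and $C^2$ catches exactly this case, and your argument there (no fixed component because the irreducible member $C$ moves, hence finite base locus of cardinality at most $C^2=0$) disposes of it cleanly; the paper's reliance on $C$ being nef \emph{and big} for Kawamata--Viehweg does not literally apply to such curves. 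Two further points where you are more complete: you actually prove that $C\cdot F=1$ forces $C$ to be a $(-1)$-curve (finite birational onto $\bP^1$, hence a smooth rational section, then adjunction), where the paper only remarks ``$f\cdot C\ge 2$ since $C$ is not a section''; and you restrict to \emph{all} integral fibers rather than only a smooth one, so your argument also excludes base points on the twelve nodal fibers, which the paper's restriction to a smooth $f\in|{-}K_S|$ leaves to the reader. The facts you quote without proof are standard and acceptable at this level: that a line bundle of degree $\ge 2$ on an integral arithmetic-genus-one curve is globally generated (classical for smooth elliptic curves, and true for integral Gorenstein curves of genus one), and that two general members of a linear system with no fixed component meet properly. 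One small omission: $h^0(\cO_S(C))\ge\chi(\cO_S(C))$ implicitly uses $h^2(\cO_S(C))=h^0(K_S-C)=0$, which holds because $-K_S$ and $C$ are both nonzero effective classes, so $K_S-C$ pairs negatively with an ample class.
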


\begin{proof}
(i)
Note that $S$ is defined as 
$$(s F + t G =0) \subset \bP^2 \times \bP^1,$$
 where $[s:t] \in \bP^1$ is the coordinates 
and $F, G \in \bC[x_0, x_1, x_2]$ are general homogeneous polynomials of degree $3$. 
By this description, we see that $S$ is smooth and $p_2$ is the blow-up at  9 points $(F= G=0) \subset \bP^2$. 
We see that $- K_S = p_2^* \cO_{\bP^1}(1)$ and it induces an elliptic fibration, thus $S$ is a rational elliptic surface with a section. 

Since $-K_S$ is nef, an irreducible curve $C$ such that $C^2 <0$ is either a $(-1)$-curve or $(-2)$-curve. 
If $C \subset S$ is a $(-2)$-curve, then $C$ is $p_2$-vertical by $-K_S \cdot C =0$ and contained in a singular fiber. 
It is well-known that a generic pencil of cubic curves contains only irreducible curves (cf. \cite[Lemma 3.1]{MR2457523}), 
thus $S$ can not contain a $(-2)$-curve since a curve in a pencil can have singularities outside the base locus and all the members of $|{-}K_S|$ are irreducible.

\vspace{2mm}

\noindent(ii)
We see that $C^2 \ge 0$ since $K_S \cdot C \le 0$ and $C$ is neither a $(-1)$-curve nor a $(-2)$-curve. 
If $p_2(C)$ is a point, then $C$ is a fiber of $p_2$ since all fibers are irreducible and reduced. 
Thus $|C| = |{-}K_S|$ is a free linear system. 

If $p_2(C) = \bP^1$, then we see that $C^2 >0$ since $-K_S \cdot C >0$ and $C$ is not a $(-1)$-curve.  
We see that $$
h^0(S,\cO_S(C)) \ge \chi(S,\cO_S(C)) = \chi(S, \cO_S) + \frac{C(C-K_S)}{2} > 1+ \frac{C^2}{2} >1.   
$$
Thus $|C|$ has no fixed part and $C$ is nef and big. 
We also check that $|C|$ is free. Indeed, we have an exact sequence 
\[
H^0(S, \cO_S(C)) \rightarrow H^0(f, \cO_f(C)) \rightarrow H^1(S, \cO_S(C-f))
\] 
for a smooth fiber $f \in |{-}K_S|$, $H^1(S, \cO_S(C-f))= H^1(S, K_S+C) =0$ by the vanishing theorem and $f \cdot C \ge 2$ since $C$ is not a section. 
\end{proof}
	
We also need the following isomorphism of a rational elliptic surface $S$ induced by a quadratic transformation of $\bP^2$. 
(ii) and (iii) are technical, but essential in the construction of our Calabi-Yau manifolds. 	

\begin{prop}\label{prop:quadraticTrans}
Let $S \subset \bP^2 \times \bP^1$ be a general $(3,1)$-hypersurface. 
Let $p_1, \ldots, p_9 \in \bP^2$ be the points on which the birational morphism $\mu=p_1 \colon S \rightarrow \bP^2$ induced by the 1st projection 
is not an isomorphism. 
Let $E_i:= \mu^{-1}(p_i)$ for $i=1, \ldots, 9$ be $(-1)$-curves and $H:= \mu^*\cO_{\bP^2}(1)$. Then we have the following. 
\begin{enumerate}
\item[(i)] For $1\le i<j<k \le 9$, there exist a hypersurface $S_{ijk} \subset \bP^2 \times \bP^1$ and an isomorphism $\phi_{ijk} \colon S \rightarrow S_{ijk}$ over $\bP^1$
such that $$\phi_{ijk}^* (H_{ijk}) = 2H - E_i - E_j - E_k, $$ where $H_{ijk}$ is the pull-back of $\cO_{\bP^2}(1)$ to $S_{ijk}$ by the 1st projection. 
\item[(ii)] For a positive integer $m$, there exist a hypersurface $S_m \subset \bP^2 \times \bP^1$ and an isomorphism $\phi_m \colon S \rightarrow S_m$ over $\bP^1$ such that 
\begin{equation}\label{eq:phi-mpullback}
\phi_m^* H_{S_m} = (27m^2+1)H - (9m^2-3m)F_1 - 9m^2 F_2 - (9m^2+3m) F_3, 
\end{equation}
where $H_{S_m}$ is the pull-back of $\cO_{\bP^2}(1)$ to $S_m$ and $F_i := E_{3i-2} + E_{3i-1} + E_{3i}$ for $i=1,2,3$.  
\item[(iii)] In (ii), the divisor $L_m:= H + \phi_m^* H_{S_m} + mK_S$ is ample and free on $S$. 
\end{enumerate}
\end{prop}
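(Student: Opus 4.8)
The plan is to treat (i), (ii), (iii) in turn; the geometric input throughout is the quadratic (Cremona) transformation of the cubic pencil cutting out $S$, and the algebraic bookkeeping takes place in $\Pic S=\bZ H\oplus\bigoplus_i\bZ E_i$.

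\emph{Part (i).} By Proposition \ref{prop:hypersurfaceP2xP1}, $S=(sF+tG=0)$, so $\mu$ blows down $E_1,\dots,E_9$ to the nine base points $p_1,\dots,p_9$ of the pencil $\langle F,G\rangle$ of plane cubics. The three chosen centers are not collinear: a line through three of them would have strict transform of class $H-E_i-E_j-E_k$, a $(-2)$-curve, contradicting Proposition \ref{prop:hypersurfaceP2xP1}(i). Hence the standard quadratic transformation $\sigma=\sigma_{ijk}\colon\bP^2\dashrightarrow\bP^2$ with these centers is defined, and since every member of $\langle F,G\rangle$ passes simply through $p_i,p_j,p_k$, it carries the pencil to a pencil of cubics (the degree is $2\cdot3-1-1-1=3$). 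I would set $S_{ijk}=(sF'+tG'=0)$ from the transformed pencil and define $\phi_{ijk}(x,[s:t])=(\sigma(x),[s:t])$, which is the identity on $\bP^1$. Blowing up $p_i,p_j,p_k$ resolves $\sigma$, and on $S$ these points are already replaced by $E_i,E_j,E_k$; the inverse is resolved on $S_{ijk}$, and both surfaces are smooth (the required general position of the nine points is guaranteed by the absence of $(-2)$-curves), so $\phi_{ijk}$ is an isomorphism over $\bP^1$. Finally the $\sigma$-preimage of a general line is a conic through the three centers, of class $2H-E_i-E_j-E_k$, giving $\phi_{ijk}^*H_{ijk}=2H-E_i-E_j-E_k$.

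\emph{Part (ii).} On $\Pic S$ the map $c_{ijk}:=\phi_{ijk}^*$ is exactly the reflection $s_\alpha$ in the root $\alpha=H-E_i-E_j-E_k$ (indeed $\alpha^2=-2$ and $s_\alpha(H)=2H-E_i-E_j-E_k$). Because $K_S^2=0$, the relevant isometry group fixing $-K_S$ is the \emph{affine} Weyl group $\widetilde{W}(E_8)=W(E_8)\ltimes Q(E_8)$, which is infinite and contains the Mordell--Weil translations; this is what produces unboundedly large coefficients. Note the $c_{ijk}$ already generate the permutations $s_{E_i-E_j}$ (e.g. $c_{123}c_{124}c_{123}=s_{E_3-E_4}$), hence all of $\widetilde{W}(E_8)$. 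I would realize the translation $g=t_\lambda$ by $\lambda=F_3-F_1\in Q(E_8)$ (a vector with $-\lambda^2=6$) as a composite of the $\phi_{ijk}$, producing an isomorphism $\phi_1\colon S\to S_1$ over $\bP^1$ onto another $(3,1)$-hypersurface with $\phi_1^*H_{S_1}=g(H)=28H-6F_1-9F_2-12F_3$. The map $g$ is unipotent, with $(g-\id)^2(H)=18(-K_S)$ and $(g-\id)^3=0$; letting $\phi_m$ realize $g^m$ and expanding
\[
g^m(H)=H+m\,(g-\id)(H)+\tbinom{m}{2}(g-\id)^2(H)=(27m^2+1)H-(9m^2-3m)F_1-9m^2F_2-(9m^2+3m)F_3
\]
yields \eqref{eq:phi-mpullback}. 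The target $S_m$ is the $(3,1)$-hypersurface produced by the composite.

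\emph{Part (iii).} Put $M:=H+\phi_m^*H_{S_m}$, so $L_m=M-m(-K_S)$ with $f:=-K_S$ the fibre class. Using \eqref{eq:phi-mpullback} and $H\cdot F_i=0$, $F_i\cdot F_j=-3\delta_{ij}$ one gets $M\cdot f=6$, $M^2=54m^2+4$, hence $L_m^2=54m^2-12m+4>0$ and, by Riemann--Roch, $L_m$ is effective; thus $L_m$ is big. On a surface only irreducible $C$ with $C^2<0$ can meet a big divisor negatively, and by Proposition \ref{prop:hypersurfaceP2xP1}(ii) these are exactly the $(-1)$-curves, i.e. the sections of $p_2$. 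For such an $E$ the lattice $\langle E,f\rangle$ is hyperbolic and unimodular, so its orthogonal complement is negative definite; projecting $M$ onto it (with $M\cdot f=6$) gives $M^2\le 12\,(M\cdot E)+36$, whence
\[
M\cdot E\ \ge\ \frac{M^2-36}{12}\ =\ \frac{27m^2-16}{6}\ >\ m\qquad(m\ge1).
\]
Therefore $L_m\cdot E=M\cdot E-m>0$ for every $(-1)$-curve $E$, so $L_m$ is nef; since $L_m^2>0$, any $C$ with $L_m\cdot C=0$ would have $C^2<0$, which is excluded, so $L_m\cdot C>0$ for all $C$ and $L_m$ is ample by Nakai--Moishezon. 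Freeness follows as in Proposition \ref{prop:hypersurfaceP2xP1}(ii): $L_m$ is nef and big with $L_m\cdot f=6\ge2$, so restricting to a fibre $f\in|{-}K_S|$ and using $H^1(S,\cO_S(L_m+K_S))=0$ together with base point freeness of a degree-$6$ line bundle on the integral arithmetic-genus-$1$ fibres shows $|L_m|$ has no base point.

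The main obstacle is the realization in (ii): one must exhibit an explicit word in the quadratic transformations $\phi_{ijk}$ of (i) whose induced isometry is the translation $t_\lambda$ with $\lambda=F_3-F_1$, and verify it is unipotent with precisely the coefficients of \eqref{eq:phi-mpullback}. It is exactly this passage into the \emph{infinite} affine Weyl group—available only because $K_S^2=0$—that forces the coefficients, and ultimately $b_2$, to be unbounded. By contrast, parts (i) and (iii) are comparatively routine once one knows $S$ has no $(-2)$-curve: that single property supplies the general position of the Cremona centers, the irreducibility of all fibres, and the reduction in (iii) to $(-1)$-curves.
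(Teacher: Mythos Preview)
Your proof is correct; part (i) matches the paper, while (ii) and (iii) take a genuinely different and more conceptual route. For (ii) the paper simply \emph{defines} $\phi_1$ as the explicit six-letter word $(\phi_{789}\circ\phi_{456}\circ\phi_{123})^2$ and verifies \eqref{eq:phi-mpullback} by direct induction on $m$; your identification of this composite as the affine-Weyl translation $t_\lambda$ (up to a harmless sign: it is $\lambda=F_1-F_3$ rather than $F_3-F_1$) and the unipotent binomial expansion $(g-\id)^3=0$ explain \emph{why} the coefficients are quadratic in $m$, and your worry about ``exhibiting the word'' is unnecessary---you already showed the $c_{ijk}$ generate $\widetilde W(E_8)$, which is all the existence statement needs. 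For (iii) the paper writes a $(-1)$-curve as $\alpha H-\sum\beta_iE_i$ and checks $L_m\cdot C>0$ by a case split on $\alpha\le m$ versus $\alpha>m$ with some ad hoc inequalities; your orthogonal projection of $M$ onto the unimodular hyperbolic plane $\langle E,f\rangle$, giving the uniform bound $M\cdot E\ge(M^2-36)/12$, is cleaner and explains the estimate structurally. Both approaches finish ampleness by Nakai--Moishezon and freeness by the same restriction-to-fibre argument.
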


\begin{rem}
In (iii), the essential part used later is that the divisor is effective and can be written as a sum of smooth curves.  
However, it is nice to show freeness since then the linear system contains a smooth irreducible member. 
By this property, we can determine the 2nd Betti number of our Calabi-Yau manifolds obtained as smoothings. 
\end{rem}

\begin{proof}
\noindent(i) We see that $p_1, \ldots, p_9 \in \bP^2$ is in ``Cremona general position'' in the sense of \cite[pp.1178]{MR2457523}, 
thus can consider the quadratic transformation $q_{ijk}\colon \bP^2 \dashrightarrow \bP^2$ at $p_i, p_j, p_k$. 
This $q_{ijk}$ induces an isomorphism $\phi_{ijk} \colon S \rightarrow S_{ijk}$ onto a hypersurface  $S_{ijk} \subset \bP^2 \times \bP^1$. 
By the construction, we see that $\phi_{ijk}$ is induced by the birational transformation $q_{ijk} \times \id \colon \bP^2 \times \bP^1 \dashrightarrow \bP^2 \times \bP^1$, thus $\phi_{ijk}$ is an isomorphism over $\bP^1$. 

Let $E'_i, E'_j, E'_k \in \Pic S_{ijk}$ be the images of $H-E_j-E_k, H- E_i - E_k, H- E_i - E_j \in \Pic S$ by $\phi_{ijk}$. 
For $l \neq i,j,k$, let $E'_l:= \phi_{ijk}(E_l) \subset S_{ijk}$. 
We identify $\Pic S \simeq \bZ^{10}$ and $\Pic S_{ijk} \simeq \bZ^{10}$ by the basis $(H, E_1, \ldots, E_9)$ and $(H_{ijk}, E'_1, \ldots, E'_9)$. Here $\bZ^{10}$ is the lattice with a bilinear form $$(a, b_1, \ldots, b_9) \cdot (a', b'_1, \ldots, b'_9):= aa' - \sum_{l=1}^9 b_l b'_l. $$ 
Let $h, e_i, e_j, e_k \in \bZ^{10}$ be the images of $H, E_i, E_j, E_k$ via the identification $\Pic S \rightarrow \bZ^{10}$, that is, 
\[
h:= (1,0, \ldots ,0), \\
e_1:= (0,1, 0, \ldots, 0), \cdots, e_9:= (0,0,\ldots, 0,1) \in \bZ^{10}.
\] 
Then we see that $\phi_{ijk}^* \colon \Pic S_{ijk} \rightarrow \Pic S$ is the reflection orthogonal to $H- E_i - E_j - E_k$, that is, it induces $\phi_{ijk}^* \colon \bZ^{10} \rightarrow \bZ^{10}$ determined by
$$\phi_{ijk}^*(x) = x+ (x \cdot \alpha_{ijk}) \alpha_{ijk}, $$ 
where $\alpha_{ijk}:=h-e_i - e_j - e_k \in \bZ^{10}$.  
Hence we obtain the required equality by substituting $x=h$. 

\vspace{2mm}

\noindent(ii) 
Let $\psi_S:= \phi_{789} \circ \phi_{456} \circ \phi_{123} \colon S \rightarrow S'$ be a composite of three isomorphisms as in (i), that is, 
$\phi_{123}$ is the quadratic transformation at $\{ p_1, p_2, p_3 \}$, and $\phi_{456}$ and $\phi_{789}$ are quadratic transformation at 
the images of $\{ p_4,p_5,p_6\}$ and $\{p_7, p_8, p_9 \}$ respectively. 
Then let $$\psi_1:=\psi_{S'} \circ \psi_S \colon S \rightarrow S'', $$ where $\psi_{S'}$ is the same operation on $S'$ and let $S_1:= S''$ (and $S_0:=S$). 
We can perform this operation for any positive integer $i$ and construct an isomorphism 
$\psi_i \colon S_{i-1} \rightarrow S_i$ as a composite of six quadratic transformations. 
Now let $\phi_m:= \psi_m \circ \cdots \circ \psi_1 \colon S \rightarrow S_m$. 

On each surface $T=S_i$, we have an isomorphism 
$\Pic T \rightarrow \bZ^{10}$ determined by the basis $(H_{T}, E_{T,1}, \ldots,  E_{T, 9})$, where $H_T$ is the pull-back of $\cO_{\bP^2}(1)$ and $E_{T,i}$'s are the exceptional divisors. 
Note that the labelling for the exceptional divisors are determined as (i). 
We also use the same symbol for isomorphisms of the Picard group and $\bZ^{10}$, that is, 
for $\psi_1 \colon S \rightarrow S'$, we write $\psi_1^* \colon \Pic S' \rightarrow \Pic S$ and $\psi_1^* \colon \bZ^{10} \rightarrow \bZ^{10}$, for example. 
We are reduced to show the following to obtain the equality (\ref{eq:phi-mpullback}). 

\begin{claim} Let $h, e_1, \ldots, e_9 \in \bZ^{10}$ be the elements corresponding to 
$(H, E_1, \ldots, E_9)$ or $(H_{S_m}, E_{S_m, 1}, \ldots, E_{S_m, 9})$ as before. Let $f_i := e_{3i-2} + e_{3i-1} + e_{3i}$ for $i=1,2,3$. Then we have 
\begin{equation}\label{eq:pullbackclaim}
\phi_m^*(h) = (27m^2+1)h - (9m^2-3m)f_1 - 9m^2 f_2 - (9m^2+3m) f_3. 
\end{equation}
\end{claim}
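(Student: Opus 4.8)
The plan is to turn the statement into a computation in the lattice $(\bZ^{10},\cdot)$ and then into a $4\times 4$ matrix power.

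First I would record the shape of each factor. By part (i), for every admissible triple the pullback $\phi_{ijk}^*$ is the reflection
\[
r_{ijk}(x) = x + (x\cdot \alpha_{ijk})\,\alpha_{ijk}, \qquad \alpha_{ijk} = h - e_i - e_j - e_k ,
\]
which is an isometry of $(\bZ^{10},\cdot)$ since $\alpha_{ijk}^2 = -2$. The consistent labelling of the exceptional curves guarantees that, read in the standard bases, every step $\psi_i\colon S_{i-1}\to S_i$ (a composite of six quadratic transformations) induces the \emph{same} isometry $P$ of $\bZ^{10}$, namely the product of the six reflections obtained by applying $r_{123},r_{456},r_{789}$ twice, in the order dictated by $(\phi'\circ\phi)^*=\phi^*\circ\phi'^*$. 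Since $\phi_m = \psi_m\circ\cdots\circ\psi_1$, taking pullbacks gives $\phi_m^* = \psi_1^*\circ\cdots\circ\psi_m^* = P^{\,m}$, so it suffices to compute $P^m(h)$.

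Next I would cut down the rank. The vectors $\alpha_{123}=h-f_1$, $\alpha_{456}=h-f_2$, $\alpha_{789}=h-f_3$ all lie in the rank-$4$ sublattice $V := \langle h, f_1, f_2, f_3\rangle$, and each reflection carries the generators of $V$ back into $V$; hence $P$ preserves $V$. On $V$ the pairing is $h^2=1$, $h\cdot f_i = 0$, $f_i\cdot f_j = -3\delta_{ij}$, so that $h\cdot\alpha_j = 1$ and $f_i\cdot\alpha_j = 3\delta_{ij}$. Using these I would write each $r_\ell$ in the basis $(h,f_1,f_2,f_3)$ (it fixes the two $f$'s outside its triple and sends $h\mapsto 2h-f_\ell$, $f_\ell\mapsto 3h-2f_\ell$) and multiply the six factors to obtain the matrix of $P$.

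The clean way to finish is to observe that $P$ acts unipotently on $V$ with $(P-\id)^3 = 0$ (a product of six reflections in the signature-$(1,3)$ lattice $V$, with a rank-one fixed subspace); this is exactly what forces $P^m(h)$ to be quadratic in $m$. Then
\[
P^m = \id + m\,(P-\id) + \binom{m}{2}(P-\id)^2 ,
\]
and evaluating at $h$ and reading off coordinates yields $(27m^2+1)h - (9m^2-3m)f_1 - 9m^2 f_2 - (9m^2+3m)f_3$. Equivalently one argues by induction on $m$: the base case $\phi_1^*=P$ is the explicit matrix, and the inductive step is a single application of $P$ to the rank-$4$ vector, matched against the quadratic ansatz.

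I expect the main obstacle to be the bookkeeping of the consistent labelling in the first paragraph: one must verify that after each quadratic transformation the relabelled standard basis makes the \emph{next} pullback again a reflection in the same standard vector, so that $P$ is a well-defined isometry of $\bZ^{10}$ independent of the step and the steps genuinely multiply. Once this is pinned down, the remainder is the rank-$4$ matrix computation together with the unipotent power formula, both routine. A secondary point of care is the composition order (pullback reverses it), which is precisely what decides whether the $+3m$ and $-3m$ corrections attach to $f_3$ or to $f_1$.
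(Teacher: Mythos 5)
Your main line of argument is correct but genuinely different from the paper's. The paper proves the Claim by bare induction on $m$: it applies the three reflections to $h$ by hand to get $\psi_S^*(h)=8h-f_1-2f_2-4f_3$ and then $\psi_1^*(h)=28h-6f_1-9f_2-12f_3$, and verifies that one more application of the same operation carries the quadratic ansatz for $\phi_m^*(h)$ to that for $\phi_{m+1}^*(h)$. Your route instead restricts to the rank-$4$ sublattice $V=\langle h,f_1,f_2,f_3\rangle$ (your restricted pairing $h^2=1$, $f_i\cdot f_j=-3\delta_{ij}$ and the single-reflection formulas $h\mapsto 2h-f_\ell$, $f_\ell\mapsto 3h-2f_\ell$, $f_i\mapsto f_i$ are all correct), computes the one-step matrix $P$, and then uses $P^m=\id+m(P-\id)+\binom{m}{2}(P-\id)^2$. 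This derives the closed form and explains structurally why the coefficients are quadratic in $m$, whereas the paper's induction only verifies a formula that must be guessed in advance; your fallback ``induction against the quadratic ansatz'' is exactly the paper's proof.

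One point in your write-up is not a proof as stated: the parenthetical justification of unipotency. A product of reflections in a signature-$(1,3)$ lattice need \emph{not} be unipotent --- it can be hyperbolic, with real eigenvalues $\lambda^{\pm 1}$, $\lambda>1$, in which case $P^m(h)$ grows exponentially --- and having a rank-one fixed subspace does not exclude this. In the present situation unipotency does hold (conceptually, because the six reflections compose to a translation-type element of the affine Weyl group acting on $\Pic S$, fixing $K_S$), but it must actually be checked; fortunately your plan computes the $4\times 4$ matrix of $P$ anyway, and then the verification is finite: in the basis $(h,f_1,f_2,f_3)$ one finds that every column of $(P-\id)^2$ equals $(54,-18,-18,-18)^{T}$ while every column of $P-\id$ has entries summing to zero, whence $(P-\id)^3=0$, and evaluating the power formula at $h$ gives exactly $(27m^2+1)h-(9m^2-3m)f_1-9m^2f_2-(9m^2+3m)f_3$. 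Finally, your caution about composition order is warranted but immaterial: the strict convention $(\phi'\circ\phi)^*=\phi^*\circ\phi'^*$ versus the order the paper actually uses in its computation only swaps the roles of $f_1$ and $f_3$, i.e.\ relabels which triple of points receives the $\pm 3m$ correction, and nothing downstream in the paper depends on that choice.
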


\begin{proof}[Proof of Claim]
We check the required equality by induction on $m$. 
Recall that $\phi_{ijk}^* \colon \bZ^{10} \rightarrow \bZ^{10}$ is the reflection for $\alpha_{ijk} = h-e_i -e_j -e_k \in \bZ^{10}$. 
Then we compute 
\begin{multline*}
\psi_S^*(h) = \phi_{789}^*(\phi_{456}^*(\phi_{123}^*(h)) ) =\phi_{789}^*(\phi_{456}^*(2h-f_1) ) \\
 =\phi_{789}^*(4h-f_1-2f_2) =  8h-f_1-2f_2 -4f_3. 
\end{multline*}
 Then we compute similarly
\begin{equation*}
\psi_1^*(h) = \phi_{789}^*(\phi_{456}^*(\phi_{123}^*(8h-f_1 -2f_2 -4f_3)) ) = 28h-6f_1-9f_2 -12f_3, 
\end{equation*}
thus obtain the equality for $\phi_1 = \psi_1$. 
Suppose that we have the equality (\ref{eq:pullbackclaim}) for $\phi_m$. 
By a similar computation, we obtain 
\begin{multline*}
\phi_{m+1}^*(h) = \psi_{m+1}^* \phi_m^*(h) \\ 
= \psi_{m+1}^*((27m^2+1)h - (9m^2-3m)f_1 - 9m^2 f_2 - (9m^2+3m) f_3) \\
= (27(m+1)^2+1)h - (9(m+1)^2-3(m+1)) f_1 - 9(m+1)^2 f_2 - (9(m+1)^2 + 3(m+1)) f_3. 
\end{multline*}
Thus we obtain the claim by induction. 
\end{proof}
This finishes the proof of (ii). 

\vspace{2mm}

\noindent(iii) 
We have $L_m \cdot (-K_S) = 6 >0$ and 
\begin{multline*}
L_m^2 = (H + \phi_m^*H_{S_m})^2 + 2m (K_S \cdot (H+ \phi_m^*H_{S_m}))\\ 
 = (2+2(27m^2+1)) -12m
= 54m^2 -12m +4 >0. 
\end{multline*}
By these and the Riemann-Roch formula, 
we see that $L_m$ is effective. 

Since $S$ is general, it is enough to show $L_m \cdot C >0$ for all $(-1)$-curve $C$.  
We write $C= \alpha H - \sum_{i=1}^9 \beta_i E_i$ for some integers $\alpha, \beta_1, \ldots, \beta_9$ 
such that $\alpha^2 - \sum_{i=1}^9 \beta_i^2 =-1$ and $3\alpha - \sum_{i=1}^9 \beta_i =1$. Let $\gamma_i:= \beta_{3i-2}+\beta_{3i-1} + \beta_{3i}$ 
for $i=1,2,3$. 
We may assume that $\alpha \le m$ since, if $\alpha >m$, then we have 
 $$L_m \cdot C \ge (H+mK_S)\cdot C = \alpha-m >0. $$ 
 We may also assume that $\beta_i \ge 0$ for all $i$ since we have 
 \[
 L_m \cdot E_i = \phi_m^*(H_{S_m}) \cdot E_i + mK_S \cdot E_i \ge 
 (9m^2 -3m) -m >0. 
 \]
Then we compute 
\begin{multline*}
L_m \cdot C = \alpha(27m^2+2) -(9m^2-3m)\gamma_1 - 9m^2 \gamma_2 - (9m^2+3m)\gamma_3 -m \\ 
= \alpha(27m^2+2) -(9m^2-3m)(3\alpha-1) - 3m(\gamma_2 + 2 \gamma_3) -m \\
 =(9m+2) \alpha +(9m^2-3m) -3m(\gamma_2 + 2 \gamma_3)-m  \\ 
 \ge (9m+2) \alpha +(9m^2-3m) -3m(2(3\alpha-1)) -m \\
 = -9m\alpha + 2 \alpha +9m^2 +2m = 9m(m - \alpha)+ 2m + 2 \alpha \ge 2m + 2 \alpha >0,  
\end{multline*}
where we used $\gamma_2 + 2\gamma_3 \le 2(3\alpha-1)$ for the 1st inequality and $m \ge \alpha$ for the 2nd inequality.  
Thus we see that $L_m$ is ample by the Nakai--Moishezon criterion. 
Since $L_m \cdot (-K_S) = 6$, we see that $L_m|_{F}$ is free for a general smooth element $F \in |{-}K_S|$. 
By this and the exact sequence $$0 \rightarrow \cO_S(K_S +L_m) \rightarrow \cO_S(L_m) \rightarrow \cO_F(L_m) \rightarrow 0,$$ 
we check that $|L_m|$ is free as in the proof of Proposition \ref{prop:hypersurfaceP2xP1}(ii). 
\end{proof}

We have the following description of Calabi-Yau manifolds of ``Schoen type'' arising from general rational elliptic surfaces (\cite{MR923487}). 
The author learned (i) in the following proposition in \cite{MOF2020} when $n=2$ (cf. \cite[Section 2]{MR1672045}).  

\begin{prop}\label{prop:SchoenCY3}
Let $S \subset \bP^2 \times \bP^1$ be a general $(3,1)$-hypersurface and $T \subset \bP^1 \times \bP^n$ a 
general $(1,n+1)$-hypersurface for some $n \ge 2$. 
Let $X_1:= \bP^2 \times T$ and $X_2:= S \times \bP^n$ be divisors in $\bP^2 \times \bP^1 \times \bP^n$ 
and $X_{12}:= X_1 \cap X_2$.  Let $p_S \colon X_{12} \rightarrow S$ and $p_T \colon X_{12} \rightarrow T$ be 
the surjective morphisms induced by the projections.  
\begin{enumerate}
\item[(i)] $X_{12}$ is a Calabi-Yau $(n+1)$-fold and there is a natural isomorphism $$\varphi \colon S \times_{\bP^1} T \rightarrow X_{12}, $$ where the fiber product is defined by the projections 
$\phi_S \colon S \rightarrow \bP^1$ and $\phi_T \colon T \rightarrow \bP^1$.  
\item[(ii)](cf. \cite[Corollary 3.2]{MR1240604}) We have  $$\Pic X_{12} \simeq (\Pic S \oplus \Pic T)/ \bZ(-K_S, K_T). $$
Thus $\Pic X_{12} \simeq \bZ^{19}$ when $ n=2$ and $\Pic X_{12} \simeq \bZ^{11}$ when $n \ge 3$. 
\end{enumerate}
\end{prop}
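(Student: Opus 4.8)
The plan is to realize $X_{12}$ as an explicit complete intersection in $P:=\bP^2\times\bP^1\times\bP^n$ and reduce everything to line-bundle cohomology on $P$. Writing $S=(sF+tG=0)$ with $F,G$ cubics in the $\bP^2$-coordinates and $T=(sA+tB=0)$ with $A,B$ of degree $n+1$ in the $\bP^n$-coordinates, the divisors are $X_2=(sF+tG=0)\in|\cO_P(3,1,0)|$ and $X_1=(sA+tB=0)\in|\cO_P(0,1,n+1)|$, and $X_{12}=X_1\cap X_2$. For the isomorphism in (i), a point of $X_{12}$ is a triple $([x],[s{:}t],[y])$ with $([x],[s{:}t])\in S$ and $([s{:}t],[y])\in T$ sharing the same $\bP^1$-coordinate, which is exactly a point of $S\times_{\bP^1}T$; this tautologically defines $\varphi$. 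Adjunction gives $K_{X_{12}}=(K_P+X_1+X_2)|_{X_{12}}=\cO_P(0,0,0)|_{X_{12}}=\cO_{X_{12}}$ and $\dim X_{12}=(n+3)-2=n+1$. Smoothness for general $S,T$ I would get from the fiber-product description: the discriminants of $\phi_S$ and $\phi_T$ in $\bP^1$ are disjoint for general choices, so over every point of $\bP^1$ at least one of the two maps is a submersion and $S\times_{\bP^1}T$ is smooth.

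The cohomological conditions are the only real computation in (i). I would resolve $\cO_{X_{12}}$ by the Koszul complex
\[
0\to\cO_P(-3,-2,-(n+1))\to\cO_P(-3,-1,0)\oplus\cO_P(0,-1,-(n+1))\to\cO_P\to\cO_{X_{12}}\to0
\]
and compute each term on $P$ by K\"unneth, using that $H^\bullet(\bP^k,\cO(d))$ is concentrated in degree $0$ for $d\ge0$ and in degree $k$ for $d\le-(k+1)$. The two middle bundles each carry the factor $\cO_{\bP^1}(-1)$, hence are acyclic on $P$; the end terms contribute $H^0(P,\cO_P)=\bC$ and $H^{n+3}(P,\cO_P(-3,-2,-(n+1)))=\bC$. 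The hypercohomology spectral sequence then has exactly two surviving entries, in total degrees $0$ and $n+1$, giving $H^0(\cO_{X_{12}})=H^{n+1}(\cO_{X_{12}})=\bC$ and $H^i(\cO_{X_{12}})=0$ for $0<i<n+1$. Since $X_{12}\subset P$ is smooth projective, hence K\"ahler, Hodge symmetry gives $H^0(X_{12},\Omega_{X_{12}}^i)\cong\overline{H^i(X_{12},\cO_{X_{12}})}=0$ for $0<i<n+1$, completing the Calabi-Yau verification.

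For (ii), the projections induce $p_S^*\oplus p_T^*\colon\Pic S\oplus\Pic T\to\Pic X_{12}$. Writing $\pi=\phi_S\circ p_S=\phi_T\circ p_T$, and using $-K_S=\phi_S^*\cO_{\bP^1}(1)$ (Proposition \ref{prop:hypersurfaceP2xP1}) together with $-K_T=\phi_T^*\cO_{\bP^1}(1)$ (adjunction on $\bP^1\times\bP^n$), one gets $p_S^*(-K_S)=\pi^*\cO_{\bP^1}(1)=p_T^*(-K_T)$, so $(-K_S,K_T)$ lies in the kernel and the map descends to the quotient. That the descended map is an isomorphism is the cited fiber-product statement \cite[Corollary 3.2]{MR1240604}. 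It then remains to identify the two factors and check primitivity. Here $\Pic S\cong\bZ^{10}$ since $S$ is $\bP^2$ blown up at nine points. For $T$: when $n=2$, $T\subset\bP^1\times\bP^2$ is again a $(1,3)$-hypersurface, i.e. a rational elliptic surface, so $\Pic T\cong\bZ^{10}$ and the quotient of $\bZ^{20}$ by one primitive vector is $\bZ^{19}$; when $n\ge3$, the second projection exhibits $T$ as the blow-up of $\bP^n$ along the smooth connected complete intersection $Z=(A=B=0)$ of codimension $2$, so $\Pic T\cong\bZ^2$ and the quotient of $\bZ^{12}$ is $\bZ^{11}$. Primitivity of $(-K_S,K_T)$ holds because $-K_S=3H-\sum_{i=1}^{9}E_i$ is already primitive in $\Pic S$.

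The main obstacle is the isomorphism in (ii): descending the pullback map and exhibiting the relation are formal, but injectivity and especially surjectivity — that every line bundle on the fiber product is a product of pullbacks from $S$ and $T$ — is the substantive point, which is why I would invoke \cite{MR1240604}. A self-contained argument would instead control $\Pic X_{12}$ via the Leray spectral sequence for $\pi\colon X_{12}\to\bP^1$ together with the product structure of the general fiber $E\times Y$ (an elliptic curve times a degree-$(n+1)$ hypersurface in $\bP^n$), which is considerably more delicate than the rest of the proof.
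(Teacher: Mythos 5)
Your proof of part (i) is correct and is in fact far more detailed than the paper's own one-line argument (``this follows from properties of fiber products''): the complete-intersection description in $\bP^2\times\bP^1\times\bP^n$, the Koszul--K\"unneth computation of $H^\bullet(\cO_{X_{12}})$, adjunction for $\omega_{X_{12}}$, the transversality argument for smoothness of the fiber product, and Hodge symmetry to kill $H^0(\Omega^i_{X_{12}})$ together verify every condition in the paper's definition of a Calabi--Yau manifold. No complaints there.

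The problem is part (ii). Everything you actually prove there --- that $(-K_S,K_T)$ lies in the kernel of $p_S^*\oplus p_T^*$, the rank count for $\Pic T$, and primitivity of the relation --- is the formal part; the substantive content, namely that $p_S^*\oplus p_T^*$ is surjective and that its kernel is \emph{exactly} $\bZ(-K_S,K_T)$, is delegated wholesale to \cite[Corollary 3.2]{MR1240604}. That citation cannot carry this weight for general $n$. The ``cf.''\ in the statement of the proposition, and the phrasing of the paper's own proof (``by the same argument as the proof of \cite[Proposition 3.1]{MR1240604}'', ``by the same argument as the 1st paragraph of the proof of \cite[Proposition 1.1]{MR1093334}''), indicate that the cited results concern the threefold situation of Schoen-type fiber products of two (rational elliptic) surfaces over $\bP^1$, i.e.\ essentially the case $n=2$. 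For $n\ge 3$ the second factor $\phi_T\colon T\to\bP^1$ is a fibration whose fibers are Calabi--Yau $(n-1)$-folds, not an elliptic surface, so the cited statement does not apply as a black box and the argument must be redone. That is exactly what the paper does: surjectivity of $p_S^*\oplus p_T^*$ by adapting the argument of \cite[Proposition 1.1]{MR1093334}, and the identification of the kernel by adapting \cite[Proposition 3.1]{MR1240604} --- any $(L_1,L_2)$ in the kernel is written as $(A_1,A_2)+m(-K_S,K_T)$ with $A_1,A_2$ numerically trivial, and then $H^1(S,\cO_S)=H^1(T,\cO_T)=0$ promotes numerical triviality to linear triviality. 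Your closing paragraph shows you recognize that this is the real issue, but your proposed self-contained substitute (Leray spectral sequence for $\pi\colon X_{12}\to\bP^1$) is left entirely as a sketch. As written, part (ii) is unproved precisely for $n\ge 3$, which is the case the paper needs for its higher-dimensional examples.
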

	
\begin{proof}
\noindent(i) This follows from properties of fiber products. 

\vspace{2mm}

\noindent(ii) By the same argument as the 1st paragraph of the proof of \cite[Proposition 1.1]{MR1093334}, we see that the naturally induced homomorphism 
$$p_S^* \oplus p_T^* \colon \Pic S \oplus \Pic T \rightarrow \Pic X_{12}$$ is surjective. 
By the same argument as the proof of \cite[Proposition 3.1]{MR1240604}, we can write $(L_1, L_2) \in \Ker (p_S^* \oplus p_T^*)$  as 
$$(L_1, L_2) = (A_1, A_2) + m(-K_S, K_T)$$ for some numerically trivial 
$A_1 \in \Pic S$, $A_2 \in \Pic T$ and $m \in \bZ$. Since $H^1(S, \cO_S) = H^1(T, \cO_T)=0$, we see that 
$A_1$ and $A_2$ are linearly trivial. Thus we see that $\Ker (p_S^* \oplus p_T^*) = \bZ(K_S, -K_T)$ and obtain the required isomorphism. 

Since the projection morphism $T \rightarrow \bP^n$ is the blow-up along the intersection $(F=G=0) \subset \bP^n$ of general divisors $(F=0), (G=0)$ of degree $n+1$, we see that $\Pic T \simeq \bZ^{10}$ if $n=2$ and $\Pic T \simeq \bZ^2$ if $n \ge 3$. 
Thus we obtain the latter statement. 
\end{proof}	

%
%
	
\section{Construction of examples}	

We first explain the construction of examples $X(m)$ by smoothing SNC varieties $X_{0}(m)$ in the following.

%
%

\begin{eg}\label{eg:construction}
Let $m, n$ be positive integers. 
Let $S \subset \bP^2 \times \bP^1$ be a general $(3,1)$-hypersurface 
and $S_m \subset \bP^2 \times \bP^1$ be the hypersurface in Proposition \ref{prop:quadraticTrans}(ii) 
with the isomorphism $\phi_m \colon S \rightarrow S_m$ over $\bP^1$. 

Let $T \subset \bP^1 \times \bP^n$ be a general $(1,n+1)$-hypersurface and 
 $$Y_1:= \bP^2 \times T \subset \bP^2 \times \bP^1 \times \bP^n.$$ 
 Let $D_1:= Y_1 \cap (S \times \bP^n) \subset Y_1$. 
 Then there is a natural isomorphism $\varphi_1 \colon D_1 \rightarrow S \times_{\bP^1}  T$ as in Proposition \ref{prop:SchoenCY3}(i). 
 Note that $D_1 \in |{-}K_{Y_1}|$ and the normal bundle $\cN_{D_1/Y_1}   \simeq p_S^* \cO_S(3h+f)$, 
where $p_S \colon D_1 \rightarrow S$ is the projection, $f \in |{-}K_S|$ and $h:= \mu_S^* \cO_{\bP^2}(1)$ for the birational morphism $\mu_S \colon S \rightarrow \bP^2$ 
induced by the projection.

 Let $Y_2:= \bP^2 \times T$ and $D_2 := Y_2 \cap (S_m \times \bP^n) \subset Y_2$. 
 Then there is a natural isomorphism $\varphi_2 \colon D_2 \rightarrow S_m \times_{\bP^1}  T$ as above. 
Let $\Phi_m \colon D_1 \rightarrow D_2$ be the isomorphism which fits in the following diagram: 
\[
\xymatrix{
D_1 \ar[d]^{\varphi_1} \ar[r]^{\Phi_m} & D_2 \ar[d]^{\varphi_2}  \\
S \times_{\bP^1} T \ar[r]^{\phi_m \times \id} & S_m \times_{\bP^1} T. 
}
\]
For $i=1, \ldots, m$, let $F_i:= p_S^{-1}(f_i) \subset D_1$ for  a smooth general member $f_i \in |{-}K_S|$. 
Let $\Gamma_m:= p_S^{-1}(C_m)$ for a smooth member 
$$C_m \in |3H + \phi_m^*(3H_{S_m}) + (m-2)K_S| = |L_m +2(H+\phi_m^*H_{S_m}) + (-2K_S)|$$ 
of an ample and free linear system guaranteed by Proposition \ref{prop:quadraticTrans}(iii).
Now let $\nu_1 \colon \tilde{Y}_1 \rightarrow \bP^2 \times T =Y_1$ be the blow-up along $F_1, \ldots, F_m$, and $\nu_2 \colon X_1 \rightarrow \tilde{Y}_2$ be the blow-up along the strict transform $\tilde{\Gamma}_m \subset \tilde{Y}_1$ of $\Gamma_m \subset D_1$. 
Thus we have a composition $\mu := \nu_1 \circ \nu_2 \colon X_1 \rightarrow Y_1$. 
Let $X_2:= Y_2 = \bP^2 \times T$. 
Let $\tilde{D}_1 \subset X_1$ be the strict transform of $D_1$ and $\mu_{D_1} \colon \tilde{D}_1 \rightarrow D_1$ be the induced isomorphism. 
Then we can glue $X_1$ and $X_2$ along the composition isomorphism 
$$\Psi_m \colon \tilde{D}_1 \xrightarrow{\mu_{D_1}} D_1 \xrightarrow{\Phi_m} D_2$$ 
and construct an SNC variety $$X_0(m):= X_0:= X_1 \cup^{\Psi_m} X_2$$ by Proposition \ref{prop:gluingSNC}. 
We check that $X_0$ is d-semistable since we have 
\[
\cN_{D_1/Y_1} \otimes \Phi_m^*\cN_{D_2/Y_2} \simeq \cO_{D_1}(p_S^*(3(H+\phi_m^*H_{S_m}) +2f)) \simeq \cO_{D_1}(F_1+ \cdots + F_m + \Gamma_m)
\] 
and the blow-up centers of $\mu$ are chosen so that this becomes trivial. 
We also see that $\omega_{X_0} \simeq \cO_{X_0}$ since $\tilde{D_1} \in |{-}K_{X_1}|$ and $D_2 \in |{-}K_{X_2}|$. 
Thus we can apply Theorem \ref{thm:smoothingSNCCY} and construct a semistable smoothing $\cX(m) \rightarrow \Delta^1$. 
Let $X(m)$ be its general smooth fiber. Thus we obtain a compact complex manifold $X(m)$. We also write $X:= X(m)$ for short in the following. 
\end{eg}

\subsection{Properties of the smoothings}

We have the following basic properties of $X(m)$ in Example \ref{eg:construction}.

 \begin{prop}\label{prop:X_mproperty} The above $X = X(m)$ satisfies the following. 
 \begin{enumerate}
 \item[(i)] The Hodge to de Rham spectral sequence $H^q(X, \Omega_X^p) \Rightarrow H^{p+q}(X, \bC)$ degenerates at $E_1$. 
\item[(ii)] We have $H^i(X, \cO_X)=0$ and $H^0(X, \Omega_X^i) =0$ for $0 < i < \dim X$. 
We also have $\omega_X \simeq \cO_X$, thus $X$ is a Calabi-Yau manifold. 
\item[(iii)] The 2nd betti number is $b_2(X) = m+\rho_T$, where $\rho_T:= \rk \Pic T$. 
\end{enumerate}
 \end{prop}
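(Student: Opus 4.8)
The plan is to reduce every assertion to the geometry of the central fibre $X_0 = X_1 \cup^{\Psi_m} X_2$ and to standard facts about semistable degenerations, exploiting that $X_1, X_2$ are rational while the double locus $D := \tilde D_1 \simeq S \times_{\bP^1} T$ is the Calabi--Yau manifold of Proposition \ref{prop:SchoenCY3}.

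\textbf{Triviality of $\omega_X$ and vanishing of $H^i(\cO_X)$.} I would first treat the two easiest claims of (ii). For $\omega_X \simeq \cO_X$, I restrict the relative dualizing sheaf: since $\phi\colon \cX \to \Delta^1$ is semistable and $X_0 = \phi^{-1}(0)$ is a reduced Cartier divisor, adjunction gives $\omega_{\cX/\Delta^1}|_{X_0} \simeq \omega_{X_0} \simeq \cO_{X_0}$ (Example \ref{eg:construction}). Because $h^0(X_0, \cO_{X_0}) = 1$, cohomology-and-base-change shows $\phi_* \omega_{\cX/\Delta^1}$ is invertible near $0$ and its generator trivializes $\omega_{\cX/\Delta^1}$ in a neighbourhood of $X_0$; restricting to $X = X_t$ gives $\omega_X \simeq \cO_X$. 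For $H^i(X, \cO_X) = 0$ with $0 < i < N$, I would compute $H^i(X_0, \cO_{X_0})$ from the Mayer--Vietoris sequence $0 \to \cO_{X_0} \to \cO_{X_1} \oplus \cO_{X_2} \to \cO_D \to 0$. As $X_1, X_2$ are rational we have $H^{>0}(X_j, \cO_{X_j}) = 0$, and as $D$ is Calabi--Yau we have $H^i(D, \cO_D) = 0$ for $0 < i < \dim D = N-1$; the long exact sequence then yields $H^i(X_0, \cO_{X_0}) = 0$ for $0 < i < N$. Upper semicontinuity of $h^i(\cO)$ in the flat family $\phi$ gives $H^i(X, \cO_X) = 0$ on the general fibre.

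\textbf{Degeneration at $E_1$ and vanishing of $H^0(\Omega_X^i)$.} For (i) I would pass to Steenbrink's limit mixed Hodge structure on $H^k(X, \bC) \simeq H^k_{\lim}$ attached to the semistable degeneration, whose weight spectral sequence has $E_1$-terms assembled from $H^*(X_1), H^*(X_2)$ and $H^*(D)$. The limit Hodge numbers $h^{p,q}_{\lim} := \dim \Gr_F^p H^{p+q}_{\lim}$ automatically satisfy $\sum_{p+q=k} h^{p,q}_{\lim} = b_k(X)$, so $E_1$-degeneration is equivalent to the equality of intrinsic and limit Hodge numbers $h^{p,q}(X) = h^{p,q}_{\lim}$. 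I expect this comparison to be the main obstacle: since $X$ is non-K\"ahler and not Moishezon, neither Hodge symmetry nor an automatic Fr\"olicher degeneration is available, and the total space $\cX$ is not K\"ahler either, so Steenbrink's classical argument does not apply directly. I would try to control the ranks of the relative Hodge bundles $R^q \phi_* \Omega^p_{\cX/\Delta^1}(\log X_0)$ via the $E_1$-degeneration of the relative logarithmic Hodge--de Rham spectral sequence (and/or via the K3-fibration structure over a projective base, which reduces the question to K\"ahler fibres and a base where Hodge theory is available), matching them with the pieces of the canonical extension of $F$. Granting this, $H^0(X, \Omega_X^i) = \Gr_F^i H^i(X)$ is identified with $\Gr_F^i H^i_{\lim}$, which the weight spectral sequence computes from $H^0(\Omega^i_{X_1}) \oplus H^0(\Omega^i_{X_2})$ and $H^0(\Omega^i_D)$; all of these vanish for $0 < i < N$ ($X_1, X_2$ rational, $D$ Calabi--Yau), giving the last vanishing in (ii) and hence that $X$ is Calabi--Yau.

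\textbf{The second Betti number.} For (iii) I would compute $b_2(X) = \dim H^2_{\lim}$ directly from the Clemens--Schmid exact sequence (equivalently the weight spectral sequence), whose input is the restriction map $r \colon H^2(X_1) \oplus H^2(X_2) \to H^2(D)$, $(a_1, a_2) \mapsto a_1|_D - \Psi_m^*(a_2|_D)$. Here $b_2(X_1) = m + \rho_T + 2$ (the $m$ exceptional divisors over $F_1, \dots, F_m$, the exceptional divisor over $\tilde\Gamma_m$, the $\bP^2$-hyperplane, and $\Pic T$) and $b_2(X_2) = 1 + \rho_T$. The decisive point is that $\Psi_m$ restricted to $D$ is induced by $\phi_m \times \id$ on $S \times_{\bP^1} T$, so on $\Pic D \simeq (\Pic S \oplus \Pic T)/\bZ(-K_S, K_T)$ the $\bP^2$-hyperplane of $X_2$ restricts to $p_S^*(\phi_m^* H_{S_m})$, while that of $X_1$ restricts to $p_S^* H$; since $\phi_m^* H_{S_m} = (27m^2+1)H - \cdots$ is far from a multiple of $H$ by Proposition \ref{prop:quadraticTrans}(ii), the hyperplane class and the exceptional class over $\tilde\Gamma_m$ cannot be completed to monodromy-invariant classes and die in $H^2(X)$, whereas the $\Pic T$-classes (on which $\phi_m$ acts trivially) and the $m$ exceptional divisors over the fibres $F_i$ survive. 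Carrying out the Clemens--Schmid bookkeeping then leaves exactly $b_2(X) = m + \rho_T$. I expect making the ``$\phi_m$ kills the hyperplane class'' heuristic precise, together with tracking the weight and monodromy contributions, to be the main computational burden of this part; conceptually it is the same mechanism that produced unbounded $b_2$ in the threefold construction of \cite{Hashimoto:aa}.
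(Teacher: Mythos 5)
Your treatment of $H^i(X,\cO_X)=0$ and $\omega_X\simeq\cO_X$ matches the paper (Mayer--Vietoris on $X_0$ plus semicontinuity; trivialization of $\omega_{\cX/\Delta^1}$ near $X_0$), but the rest has genuine gaps. The central one is (i), and it contaminates (ii). You do not prove (i): you call the comparison between fibre and limit Hodge numbers ``the main obstacle'' and proceed under ``granting this''. Moreover, the obstacle you describe is not the real one. Steenbrink's theory does not require $\cX$ or the nearby fibres to be K\"ahler; it requires the strata of the \emph{central} fibre to be compact K\"ahler, and here $X_1$, $X_2$, $X_{12}$ are projective. Given that, the logarithmic Hodge--de Rham spectral sequence $H^q(X_0,\Lambda^p_{X_0})\Rightarrow\bH^{p+q}(X_0,\Lambda^{\bullet}_{X_0})$ degenerates at $E_1$, and $\dim\bH^k(X_0,\Lambda^{\bullet}_{X_0})=b_k(X_t)$ by the nearby-cycle quasi-isomorphism; combined with Grauert semicontinuity $h^q(X_t,\Omega^p_{X_t})\le h^q(X_0,\Lambda^p_{X_0})$ and the Fr\"olicher inequality on $X_t$, this forces $E_1$-degeneration on the non-K\"ahler fibres. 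That is precisely the content of \cite[Corollary 11.24]{MR2393625}, which is the paper's entire proof of (i); your fallback ideas (relative Hodge bundles, the K3 fibration over a projective base) are not developed into an argument. The same gap undermines your proof of $H^0(X,\Omega_X^i)=0$, which routes through the unproved identification $\Gr_F^iH^i(X)\simeq\Gr_F^iH^i_{\lim}$. The paper instead argues directly: by semicontinuity it suffices to show $H^0(X_0,\Lambda^i_{X_0})=0$, which follows from Friedman's exact sequence $0\to V_0\to\Lambda^i_{X_0}\to\Omega^{i-1}_{X_{12}}\to 0$ with $V_0\subset\Omega^i_{X_1}\oplus\Omega^i_{X_2}$, since $X_1,X_2$ are rational and $X_{12}$ is Calabi--Yau; no limit mixed Hodge structure is needed.

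For (iii) you identify the right mechanism (independence of $[H]$ and $[\phi_m^*H_{S_m}]$ in $\Pic S/\bZ K_S$) and the right Betti numbers of the components, but the count is asserted (``carrying out the Clemens--Schmid bookkeeping then leaves exactly $b_2(X)=m+\rho_T$'') rather than performed, and your dies/survives heuristic is inaccurate as stated: the hyperplane classes do not individually die. With the notation of Lemma \ref{lem:X0linebundle}, the pair $\bigl(-3\mu^*(\cO_{\bP^2}(1)\boxtimes\cO_T)+F,\ 3(\cO_{\bP^2}(1)\boxtimes\cO_T)+(m-2)\pr_T^*K_T\bigr)$ lies in the kernel of the restriction map $R\colon\Pic X_1\oplus\Pic X_2\to\Pic X_{12}$ (use $F|_{\tilde D_1}=\Gamma_m$ and $p_S^*K_S=p_T^*K_T$), so a combination involving \emph{both} hyperplanes and the $\tilde\Gamma_m$-exceptional divisor survives in $H^2(X_0,\bZ)=\Pic X_0=\Ker R$. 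The paper's computation is elementary and complete: $\rk\Ker R=(m+\rho_T+2)+(\rho_T+1)-(\rho_T+2)=m+\rho_T+1$, and then exactly one class, the image of $H^0(X_0,R^1\gamma_*\bZ)\simeq\bZ$ under the Leray sequence of the Clemens map $\gamma\colon X\to X_0$, dies in $H^2(X_0,\bZ)\to H^2(X,\bZ)$, giving $b_2(X)=m+\rho_T$. That argument is purely topological and sheaf-theoretic; by contrast, the Clemens--Schmid and weight spectral sequence machinery you invoke is itself established in the K\"ahler/projective setting, so resting (iii) on it would require the same justification as (i).
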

 
 \begin{proof}
 \noindent(i) This is \cite[Corollary 11.24]{MR2393625}. 

\vspace{2mm} 
 
\noindent(ii) Let $X_{12}:= X_1 \cap X_2$. We compute $H^i(X_0, \cO_{X_0}) =0$ for $0<i < \dim X$ by the exact sequence 
\[
\cdots \rightarrow H^{i-1}(X_{12}, \cO_{X_{12}}) \rightarrow H^i(X_0, \cO_{X_0}) \rightarrow \bigoplus_{j=1}^2 H^i(X_j, \cO_{X_j})  \rightarrow \cdots.    
\]
By this and the upper semi-continuity theorem, we obtain $H^i(X, \cO_X) =0$ for $0< i < \dim X$. 
Since we have an exact sequence 
$$
H^0(X, \cO_X) \rightarrow H^0(X, \cO_X^*) \rightarrow H^1(X, \bZ) \rightarrow H^1(X, \cO_X) 
$$
from the exponential exact sequence, we see that $H^1(X, \bZ) =0$. By this and (i), we obtain $H^0(X, \Omega_X^1) =0$. 

\begin{claim}
We have $H^0(X, \Omega^i_X) =0$ for $2 \le i \le \dim X-1$. 
\end{claim}

\begin{proof}[Proof of Claim]
For the semistable smoothing $\cX \rightarrow \Delta^1$ and $i \ge 0$, we have the locally free sheaf
\[
\Lambda_{X_0}^i := \Omega^i_{\cX/ \Delta^1}(\log X_0)|_{X_0}   
\]
which is defined in \cite[pp.92]{MR707162}. 
  It is enough to show $H^0(\Lambda^i_{X_0}) =0$ for $2 \le i \le \dim X-1$ since the rank $h^0(\Omega^i_{\cX/ \Delta^1}(\log X_0)|_{X_t})$ is upper-semicontinuous for the fibers $X_t$ over $t \in \Delta^1$. 
By applying \cite[Proposition 3.5]{MR707162} to $\Lambda_{X_0}^i$ for $X_0 = X_1 \cup X_2$, we have an exact sequence 
\[
0 \rightarrow V_0 \rightarrow \Lambda_{X_0}^i \rightarrow V_1/V_0 \rightarrow 0, 
\]
where $V_0$ and $V_1/V_0$ are described as 
\[
V_0 \simeq \Ker (\Omega_{X_1}^i \oplus \Omega_{X_2}^i \xrightarrow{(\iota_1^*, -\iota_2^*)} \Omega_{X_{12}}^i), \ \ V_1/V_0 \simeq \Omega_{X_{12}}^{i-1}. 
\]
By this and $H^0(X_i, \Omega_{X_j}^i)=0=H^0(X_{12}, \Omega_{X_{12}}^{i-1})$ for $i=2, \ldots, \dim X -1$ and $j=1,2$, we obtain $H^0(V_0) =0$ and $H^0(V_1/V_0) =0$. 
Thus we obtain $H^0(\Lambda_{X_0}^i)=0$ for $2 \le i \le \dim X -1$ by the above exact sequence. 
\end{proof}

We also see that $\omega_X \simeq \cO_X$ as in the proof of \cite[Theorem 3.4]{Hashimoto:aa}. Hence we see that $X$ is a Calabi-Yau manifold in our sense. 

\vspace{2mm}

\noindent(iii) Note that $\Pic X_0 \simeq H^2(X_0, \bZ)$ and $\Pic X \simeq H^2 (X, \bZ)$ by $H^i(X_0, \cO_{X_0}) =0$ and $H^i(X, \cO_X) =0$ for $i=1,2$. 
We compute $b_2(X_0) = m+\rho_T +1$ as follows. 
We have an exact sequence 
\[
0 \rightarrow \Pic X_0 \rightarrow \Pic X_1 \oplus \Pic X_2 \xrightarrow{R} \Pic X_{12}, 
\]
where $R= (\iota_1^*, -\iota_2^*)$ for the closed immersion $\iota_i \colon X_{12} \hookrightarrow X_i$ for $i=1,2$. 
By using the isomorphism
\[
\Pic X_{12} \simeq (\Pic S \oplus \Pic T)/ \bZ(K_S, -K_T) 
\] as in Proposition \ref{prop:SchoenCY3}, we see that the image $\Image R \subset \Pic X_{12}$ is generated by the image of $\Pic T$ and $p_S^*(H), p_S^*(\phi_m^*(H_{S_m}))$. 
Thus we see that $$\Image R \simeq \bZ^{\rho_T+2}.$$   
Since $\Pic X_1 \simeq \bZ^{1+ \rho_T+m+1}= \bZ^{m+\rho_T+2}$ and $\Pic X_2 \simeq \bZ^{\rho_T+1}$, 
we see that $$\Pic X_0 \simeq \bZ^{m+\rho_T +1}$$ by the above exact sequence, thus obtain $b_2(X_0) = m+ \rho_T +1$.

Then, by the Clemens map $\gamma \colon X \rightarrow X_0$ (cf. \cite{MR444662}, \cite[Theorem 2.9]{Hashimoto:aa}), we have the exact sequence 
\[
\bZ \simeq H^0(X_0, R^1 \gamma_* \bZ) \rightarrow H^2(X_0, \bZ) \rightarrow H^2(X, \bZ) \rightarrow H^1(R^1 \gamma_* \bZ) =0.  
\]
Thus we see that $H^2(X, \bZ) \simeq \bZ^{m+\rho_T}$ as in \cite[Claim 3.6(ii)]{Hashimoto:aa}. 
 \end{proof}

The following lemma is useful to see the non-projectivity of $X$ and compute the algebraic dimension of $X$ 

\begin{lem}\label{lem:X0linebundle}
Let $X_0=X_0(m)$ be the SNC Calabi-Yau variety as in Example \ref{eg:construction}. Let $N:= \dim X_0$.  
Let $\cL_0 \in \Pic X_0$ be a line bundle such that $\cL_i:= \cL_0|_{X_i}$ is effective for $i=1,2$. 
We may write 
$$
\cL_1= \mu^* (\cO_{\bP^2}(a) \boxtimes \cO_T(H_1) ) \otimes \cO_{X_1}(\sum_{j=1}^{m} b_j E_j + c F),$$
$$
 \cL_2= \cO_{\bP^2}(a') \boxtimes \cO_{T}(H_2), 
$$
 where $E_j:= \mu^{-1}(F_j) \subset X_1$ for $j=1, \ldots , m$ and $F:= \mu^{-1}(\Gamma_m)$ are the exceptional divisors of $\mu \colon X_1 \rightarrow \bP^2 \times T$ and 
 $H_1, H_2 \in \Pic T$. 

Then we have $a=a'=0$ and $\kappa(\cL_0) \le N -2$. 
\end{lem}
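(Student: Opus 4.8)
The plan is to exploit that a class $\cL_0 \in \Pic X_0$ is exactly a pair $(\cL_1,\cL_2)$ whose restrictions to the double locus $X_{12} \simeq \tilde{D}_1 \simeq D_2$ agree under the gluing isomorphism $\Psi_m = \Phi_m \circ \mu_{D_1}$. Since $\Pic X_{12} \simeq (\Pic S \oplus \Pic T)/\bZ(K_S,-K_T)$ by Proposition \ref{prop:SchoenCY3}(ii), I would translate this compatibility into a single equation in $\Pic S \oplus \Pic T$ modulo $\bZ(K_S,-K_T)$ and read off the coefficients of $H$, $\phi_m^* H_{S_m}$ and $K_S$.

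To set up that equation I first compute both restrictions. Each blow-up center $F_j = p_S^{-1}(f_j)$ and $\Gamma_m = p_S^{-1}(C_m)$ is a Cartier divisor of $D_1$, so $\mu_{D_1}\colon \tilde{D}_1 \to D_1$ is an isomorphism and a local computation at the blow-up gives
$$\cO_{X_1}(E_j)|_{\tilde{D}_1} \simeq \mu_{D_1}^* \cO_{D_1}(F_j), \qquad \cO_{X_1}(F)|_{\tilde{D}_1} \simeq \mu_{D_1}^* \cO_{D_1}(\Gamma_m),$$
compatibly with the d-semistability relation recorded in Example \ref{eg:construction}. Using $\cO_{D_1}(F_j) = p_S^*\cO_S(-K_S)$, $\cO_{D_1}(\Gamma_m) = p_S^*\cO_S(C_m)$ and $\cO_S(C_m) = \cO_S(3H + 3\phi_m^* H_{S_m} + (m-2)K_S)$, the $S$-component of $\cL_1|_{\tilde{D}_1}$ transported to $D_1$ becomes $(a+3c)H + 3c\,\phi_m^* H_{S_m} + (c(m-2)-\textstyle\sum_j b_j)K_S$, with $T$-component $H_1$. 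On the other side $\cL_2|_{D_2}$ has class $(a'H_{S_m},H_2)$ in $\Pic S_m \oplus \Pic T$, and transporting through $\Phi_m$, i.e. through $\phi_m \times \id$, replaces $H_{S_m}$ by $\phi_m^* H_{S_m}$, giving $(a'\phi_m^* H_{S_m},H_2)$.

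Equating the two $S$-components modulo $\bZ K_S$ yields $(a+3c)H + (3c-a')\phi_m^* H_{S_m} \in \bZ K_S$. The decisive input is that for $m \ge 1$ the three classes $H$, $\phi_m^* H_{S_m}$, $K_S$ are linearly independent in $\Pic S \otimes \bQ$: in the basis $(H,E_1,\dots,E_9)$ the exceptional coefficients of $\phi_m^* H_{S_m}$ are the three distinct values $-(9m^2-3m)$, $-9m^2$, $-(9m^2+3m)$ by Proposition \ref{prop:quadraticTrans}(ii), whereas those of $H$ and $K_S$ are constant. Hence $a+3c=0$ and $a'=3c$. To finish I use effectivity: $\cL_2 = \cO_{\bP^2}(a')\boxtimes\cO_T(H_2)$ on $\bP^2 \times T$ is effective only if $a' \ge 0$, and a nonzero section of $\cL_1$ restricts over the locus where $\mu$ is an isomorphism and, the blow-up centers having codimension $\ge 2$, extends to a nonzero section of $\cO_{\bP^2}(a)\boxtimes\cO_T(H_1)$ on $\bP^2 \times T$, forcing $a \ge 0$. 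Combined with $a=-3c$ and $a'=3c$ this gives $c=0$ and $a=a'=0$.

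For the Iitaka dimension I would tensor the Mayer--Vietoris sequence of $X_0 = X_1 \cup X_2$ with $\cL_0^{\otimes k}$ to obtain $h^0(X_0,\cL_0^{\otimes k}) \le h^0(X_1,\cL_1^{\otimes k}) + h^0(X_2,\cL_2^{\otimes k})$. With $a=a'=0$ we have $h^0(X_2,\cL_2^{\otimes k}) = h^0(T,kH_2)$, and $\cL_1^{\otimes k}$ agrees with $\mu^*(\cO_{\bP^2}(0)\boxtimes\cO_T(kH_1))$ off the exceptional locus, so its sections inject into $H^0(T,kH_1)$ by the same codimension argument. Since $\dim T = N-2$, both terms are $O(k^{N-2})$ and therefore $\kappa(\cL_0) \le N-2$. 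The main obstacle is the pair of restriction computations together with the linear-independence step, since this is precisely where the explicit quadratic-transformation formula of Proposition \ref{prop:quadraticTrans}(ii) and the hypothesis $m \ge 1$ are needed; the effectivity and Mayer--Vietoris bounds are then routine.
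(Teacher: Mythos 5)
Your proposal is correct and takes essentially the same approach as the paper: both compare $\cL_1|_{\tilde{D}_1}$ with $\Psi_m^*(\cL_2|_{D_2})$ inside $(\Pic S \oplus \Pic T)/\bZ(K_S,-K_T)$, project to $\Pic S/\bZ K_S$, and use the linear independence of $[H]$ and $[\phi_m^* H_{S_m}]$ there to get $a+3c=0$ and $a'=3c$, then conclude via $a, a' \ge 0$ and the resulting section bound $\kappa(\cL_0) \le \dim T = N-2$. The details you supply --- the restriction formulas $\cO_{X_1}(E_j)|_{\tilde{D}_1} \simeq \mu_{D_1}^*\cO_{D_1}(F_j)$, the explicit verification of linear independence from Proposition \ref{prop:quadraticTrans}(ii), the Hartogs extension giving $a \ge 0$, and the Mayer--Vietoris bound on $h^0(X_0, \cL_0^{\otimes k})$ --- are exactly the steps the paper leaves implicit.
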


\begin{proof}
Note that $\cL_1|_{\tilde{D}_1} \simeq \Psi_{m}^* \cL_2|_{D_2}$, where $\Psi_m \colon \tilde{D}_1 \rightarrow D_2$ is the isomorphism used to construct $X_0$. 
We have a natural surjection 
$$
\pi_S \colon \Pic \tilde{D}_1 \xrightarrow{\simeq} (\Pic S \oplus \Pic T)/ \bZ(K_S, -K_T) \xrightarrow{\pr} \Pic S/\bZ(K_S)
$$ 
by Proposition \ref{prop:SchoenCY3}(ii), where $\pr$ is the projection.   
We see that 
\[
\pi_S(\cL_1|_{\tilde{D}_1}) = [a H + c(3(H+ H'))] = (a+3c)[H] + 3c[H'] \in \Pic S/\bZ (K_S),   
\]
where $[\cdot]$ means the image of $\Pic S \rightarrow \Pic S /\bZ K_S$ and $H':= \phi_m^*(H_{S_m})$. 
We also see that 
\[
\pi_S(\Psi_m^* \cL_2|_{D_2}) = a' [H']. 
\]
By comparing the above two terms, we see that $a' = 3c$ and $a+3c =0$ since $[H]$ and $[H']$ are linearly independent. 
Since $a, a' \ge 0$, we obtain $a=a'=0$. This implies that $\kappa(\cL_i) \le \dim T$ for $i=1,2$, thus $\kappa(\cL_0) \le \dim T=N-2$.  
\end{proof} 
 
\begin{prop}
For $m>0$, $X=X(m)$ is not projective. 
Moreover, we have $$a(X) =n = \dim X -2$$ for a very general $t \in \Delta^1$, where $a(X)$ is the algebraic dimension. 
\end{prop}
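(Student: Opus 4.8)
The plan is to derive everything from Lemma \ref{lem:X0linebundle} by a degeneration argument, using the elementary fact that for any compact complex manifold $a(X) = \max_{L \in \Pic X} \kappa(X,L)$: finitely many meromorphic functions are always ratios of sections of a single line bundle, so the algebraic reduction is dominated by $\Phi_{|L|}$ for a suitable $L$. Thus both the non-projectivity and the bound $a(X) \le N-2$ will follow once I prove $\kappa(X,L) \le N-2$ for \emph{every} $L \in \Pic X$, since then $a(X) \le N-2 < N = \dim X$ shows $X$ is not Moishezon, hence not projective.

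For the upper bound I would first transport each $L$ to the central fibre. By Proposition \ref{prop:X_mproperty} we have $H^1(X,\cO_X)=H^2(X,\cO_X)=0$, so $\Pic X \simeq H^2(X,\bZ)$, and the Clemens map gives a surjection $\gamma^* \colon \Pic X_0 \twoheadrightarrow \Pic X$ (this is exactly the surjectivity used in the proof of Proposition \ref{prop:X_mproperty}(iii)). Hence $L = \gamma^*\cL_0$ for some $\cL_0 \in \Pic X_0$, which moreover extends to a line bundle $\cL$ on $\cX$ with $\cL|_{X_0}=\cL_0$ and $\cL|_{X_t}=L$ because $\cX$ retracts onto $X_0$. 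Upper-semicontinuity of $h^0$ in the flat family $\pi\colon\cX\to\Delta^1$ then yields $h^0(X,L^{\otimes k}) \le h^0(X_0,\cL_0^{\otimes k})$ for all $k$, so $\kappa(X,L)\le\kappa(X_0,\cL_0)$. We may assume $\kappa(X,L)\ge 0$, so $L$ is effective and $\cL_0$ is effective on $X_0$; after peeling off any components of the limiting divisor supported on $X_1$ or $X_2$ (a modification by twists $\cO_{X_0}(\pm X_i)=\cO_{X_i}(\mp K_{X_i})$ on each piece, which preserves the hypotheses of the lemma and does not raise $\kappa$), I may assume the defining section restricts nontrivially to both components, so that $\cL_1:=\cL_0|_{X_1}$ and $\cL_2:=\cL_0|_{X_2}$ are both effective. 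Now Lemma \ref{lem:X0linebundle} applies and gives $\kappa(X_0,\cL_0)\le N-2$, completing the upper bound.

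For the lower bound I would exploit that the whole construction in Example \ref{eg:construction} is fibred over $T$. The projections $X_1\to T$ and $X_2 = \bP^2\times T \to T$ are compatible with the gluing isomorphism $\Psi_m$, since $\phi_m\times\id$ is the identity in the $T$-direction; they therefore glue to a morphism $g_0\colon X_0\to T$ with rational (hence cohomologically trivial) fibres. Taking an ample $A_T$ on $T$ and $\cM_0:=g_0^*A_T$, the projection formula gives $\kappa(X_0,\cM_0)=\kappa(T,A_T)=\dim T = n = N-2$. Extending the class $\cM_0$ to $\cM$ on $\cX$ and setting $\cM_t:=\cM|_{X_t}$, the upper bound already forces $\kappa(X_t,\cM_t)\le n$; the content is to produce sections on $X_t$, i.e. to show $\kappa(X_t,\cM_t)\ge n$. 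I would obtain this from the vanishing $H^1(X_0,\cM_0^{\otimes k})=0$ for $k\gg 0$ — which should follow from $R^i(g_0)_*\cO_{X_0}$ being controlled (the fibres of $g_0$ are unions of rational surfaces) together with ampleness of $A_T$ on $T$ — since then $\pi_*\cM^{\otimes k}$ is locally free and commutes with base change, so $h^0(X_t,\cM_t^{\otimes k})=h^0(X_0,\cM_0^{\otimes k})$ grows like $k^n$. Equivalently, one invokes the relative algebraic reduction of $\cX\to\Delta^1$, whose generic fibre computes $a(X_t)$ for \emph{very general} $t$; this is precisely where the ``very general'' hypothesis enters, guaranteeing that the generic value $n$ of the algebraic dimension is actually attained.

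The main obstacle is this lower bound. Upper-semicontinuity only controls $h^0(X_t)$ from above, which is the wrong direction, so the essential difficulty is to \emph{deform} the fibration $g_0\colon X_0\to T$ — equivalently the $n$-dimensional space of horizontal sections — from the central fibre to the general fibre. Everything in the first two paragraphs is soft once Lemma \ref{lem:X0linebundle} is in hand, but forcing equality $a(X_t)=n$ requires either the cohomological vanishing that makes $\pi_*\cM^{\otimes k}$ locally free, or a relative algebraic reduction argument; verifying the needed vanishing $H^1(X_0,\cM_0^{\otimes k})=0$ (hence the behaviour of $R^i(g_0)_*\cO_{X_0}$ for the reducible total space $X_0$) is the delicate step, and is the reason the sharp statement is phrased for very general $t\in\Delta^1$.
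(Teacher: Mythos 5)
Your proposal follows essentially the same route as the paper's own proof. The upper bound $a(X)\le N-2$ (hence non-projectivity, since $X$ is then not even Moishezon) is obtained exactly as in the paper: transport $L\in\Pic X$ to the central fibre via $\Pic \cX\simeq \Pic X_0$ and the Clemens map, use semicontinuity, normalize so that both restrictions are effective, and apply Lemma \ref{lem:X0linebundle}; this is precisely the argument the paper compresses into its citation of Proposition 3.19(iii) of Hashimoto--Sano. Your lower bound is also the paper's: your $\cM_0=g_0^*A_T$ is the paper's $H_0$, and deforming its sections is what the paper means by ``$H_0$ induces a line bundle $H_t$ and this induces a fiber space $X\to T$''.

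Two repairs are needed. First, the vanishing you flag as unverified does hold, and cheaply, so you should close it: push $0\to\cO_{X_0}\to\cO_{X_1}\oplus\cO_{X_2}\to\cO_{X_{12}}\to 0$ forward along $g_0$. Both $X_1$ and $X_2$ are blow-ups of $\bP^2\times T$ along smooth centres, so $(g_i)_*\cO_{X_i}=\cO_T$ and $R^1(g_i)_*\cO_{X_i}=0$ for $i=1,2$, while $X_{12}=S\times_{\bP^1}T\to T$ has connected fibres, so $(g_{12})_*\cO_{X_{12}}=\cO_T$ and the middle map of the pushed-forward sequence is the surjection $(a,b)\mapsto a-b$. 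Hence $R^1(g_0)_*\cO_{X_0}$ injects into $R^1(g_1)_*\cO_{X_1}\oplus R^1(g_2)_*\cO_{X_2}=0$, and Leray plus the projection formula and Serre vanishing give $H^1(X_0,\cM_0^{\otimes k})=H^1(T,A_T^{\otimes k})=0$ for $k\gg 0$. (Be careful with your parenthetical reasoning: the fibres of $g_0$ are degenerate K3 surfaces, so $R^2(g_0)_*\cO_{X_0}\neq 0$; it is only $R^1$ that vanishes, and only $R^1$ that you need.) Second, your attribution of the ``very general'' hypothesis is backwards. Once the vanishing is supplied, the lower bound fails only on a countable discrete set of $t$ (and in the paper's fibration formulation it holds for general $t$). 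It is the upper bound that genuinely forces very generality: one needs $h^0(X_t,\cL|_{X_t}^{\otimes k})\le h^0(X_0,\cL|_{X_0}^{\otimes k})$ simultaneously for all pairs $(\cL,k)\in\Pic\cX\times\bZ_{>0}$; each single inequality holds off a discrete analytic subset of $\Delta^1$ missing $0$, and only a very general $t$ avoids the countable union. Relatedly, in your peeling step the twist by $\cO_\cX(\pm X_i)$ may depend on the power $k$, so strictly one applies the conclusion of Lemma \ref{lem:X0linebundle} (namely $a=a'=0$, which bounds $h^0$ by sections coming from $T$) to each normalized power separately and then reads off the growth; this is the content of the details the paper defers to Hashimoto--Sano.
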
 
	
\begin{proof}
Let $\pr_T \colon \bP^2 \times T \rightarrow T$ be the projection. 
We see that, for a very ample $H_T \in \Pic T$, 
the line bundles $H_1:= \mu^*(\pr_T^*\cO_T(H_T)) \in \Pic X_1$ and $H_2 := \pr_T^* \cO_T(H_T)$ induce a line bundle $H_0 \in \Pic X_0$ 
such that $H_0|_{X_i} \simeq H_i$. Since we have an isomorphism $\Pic X_0 \simeq \Pic \cX$ as in \cite{Hashimoto:aa}, $H_0$ induces a line bundle $H_t$ and this induces 
a fiber space $X \rightarrow T$. Its general fiber is a K3 surface since the general fiber of $X_0 \rightarrow T$ is an SNC surface which is a union of $\bP^2$ and 
its blow-up at $18$ points. Thus the fiber space $X \rightarrow T$ is a K3 fibration. 

We see that there is no line bundle $L_t$ on $X$ such that $\kappa(L_t) \ge \dim T +1$ by the same argument as \cite[Proposition 3.19(iii)]{Hashimoto:aa}. 
Indeed, if such a line bundle exists, then there exists $L_0 \in \Pic X_0$ such that $L_0|_{X_i}$ is effective for $i=1,2$ and $\kappa(L_0) \ge \dim T +1$. 
This contradicts Lemma \ref{lem:X0linebundle}.  
\end{proof}	

We can also compute the following topological invariants of $X$.

\begin{prop}\label{prop:pi1Euler}
\begin{enumerate}
\item[(i)] $X$ is simply connected. 
\item[(ii)] The topological Euler number of $X$ is 
\begin{equation}\label{eq:Euler}
e(X)= (\gamma_m-12) \frac{(-n)^{n+1}+n^2 +2n}{n+1} +24(n+1)(-n)^n, 
\end{equation}
where $\gamma_m:= -18(27m^2-2m+5)$.  
\end{enumerate}
\end{prop}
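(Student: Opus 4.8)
For part (i), I would first verify that all the ingredients have the required connectivity. The projection $T\to\bP^n$ realises $T$ as the blow-up of $\bP^n$ along the smooth complete intersection $Z=(F=G=0)$, so $T$ is simply connected; hence $X_2=\bP^2\times T$ is simply connected, and $X_1$, a blow-up of $\bP^2\times T$ along smooth centres, is simply connected too. Since $\phi_S\colon S\to\bP^1$ and $\phi_T\colon T\to\bP^1$ are surjective with connected general fibre, the double locus $X_{12}=S\times_{\bP^1}T$ is connected. With these inputs I would run the van Kampen argument of \cite{Hashimoto:aa} on the Clemens model of the nearby fibre: $X$ is assembled from $X_1$ and $X_2$ by deleting tubular neighbourhoods of $X_{12}$ and regluing along the normal circle bundle of $X_{12}$, so that the simple connectivity of $X_1,X_2$ together with the connectivity of $X_{12}$ forces $\pi_1(X)=1$.

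For part (ii), the starting point is the nearby-cycle (Clemens) formula for a semistable two-component degeneration: over a smooth point of $X_0$ the Milnor fibre is a point, while over $X_{12}$ it is $\bC^{\ast}$, whence
\[
e(X)=e(X_1)+e(X_2)-2\,e(X_{12}).
\]
Write $V$ for a smooth fibre of $\phi_T$, a smooth hypersurface of degree $n+1$ in $\bP^n$; the standard Chern-class computation gives $e(V)=\tfrac{(-n)^{n+1}+n^2+2n}{n+1}$, which is exactly the quantity in \eqref{eq:Euler}. I would then evaluate the three terms via the fibrations over $\bP^1$. For $X_{12}=S\times_{\bP^1}T$ the general fibre is (elliptic curve)$\times V$ of Euler number $0$, and only the $12$ nodal fibres of the rational elliptic surface $S$ contribute, each giving $e(V)$, so $e(X_{12})=12\,e(V)$ (the singular fibres of $\phi_T$ contribute $0$, as there the $S$-factor is a smooth elliptic curve). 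Next $e(X_2)=3\,e(T)$. For $X_1$, each centre $F_j=p_S^{-1}(f_j)\cong f_j\times V$ is a product with an elliptic curve, so $e(F_j)=0$ and the codimension-$2$ blow-ups along the $F_j$ leave the Euler number unchanged, while blowing up $\tilde\Gamma_m\cong\Gamma_m$ adds $e(\Gamma_m)$; thus $e(X_1)=3\,e(T)+e(\Gamma_m)$.

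It remains to compute $e(\Gamma_m)$, $e(C_m)$ and $e(T)$. From $\Gamma_m=C_m\times_{\bP^1}T\to C_m$, cancelling the singular-fibre contributions of $\phi_T$ against those hidden inside $e(T)$, I obtain $e(\Gamma_m)=(e(C_m)-2d)\,e(V)+d\,e(T)$ with $d=C_m\cdot(-K_S)=18$. By adjunction $e(C_m)=-C_m^2+C_m\cdot(-K_S)$, and using $H^2=H'^2=1$, $H\cdot H'=27m^2+1$, $H\cdot K_S=H'\cdot K_S=-3$, $K_S^2=0$ (with $H'=\phi_m^{*}H_{S_m}$) one finds $C_m^2=486m^2-36m+108$, so $e(C_m)=-18(27m^2-2m+5)=\gamma_m$. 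Finally $\phi_T$ is a Lefschetz pencil of degree-$(n+1)$ hypersurfaces in $\bP^n$, whose number of nodal members is the discriminant degree $(n+1)n^n$, each changing the Euler number by $(-1)^n$, so $e(T)=2\,e(V)+(n+1)(-n)^n$. Substituting into $e(X)=6\,e(T)+e(\Gamma_m)-24\,e(V)=24\,e(T)+(\gamma_m-60)\,e(V)$ gives $e(X)=(\gamma_m-12)\,e(V)+24(n+1)(-n)^n$, which is \eqref{eq:Euler}.

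The individual Euler numbers are routine; the delicate points, which I expect to be the main obstacle, are the bookkeeping of the singular fibres of $\phi_T$ — seeing that their contributions cancel between $e(\Gamma_m)$ and the term $d\,e(T)$ so that only the Lefschetz count $(n+1)n^n$ survives — together with the verifications that $\tilde\Gamma_m\cong\Gamma_m$ (the strict transform is unchanged because $\Gamma_m\cap\bigcup_j F_j$ is a Cartier divisor in $\Gamma_m$) and $e(F_j)=0$, and the justification of the Clemens nearby-cycle formula and the van Kampen presentation for this particular smoothing.
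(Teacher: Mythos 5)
Your part (ii) is correct and is essentially the paper's own computation: the same relation $e(X)=e(X_1)+e(X_2)-2e(X_{12})$ coming from the Clemens map, the same blow-up and fibration bookkeeping, and the same values $e(X_{12})=12\,e(V)$, $e(X_2)=3e(T)$, $e(X_1)=3e(T)+e(\Gamma_m)$, $e(C_m)=\gamma_m$. Your formula $e(\Gamma_m)=(e(C_m)-2d)\,e(V)+d\,e(T)$ with $d=18$ is just a repackaging of the paper's count of $18d_n$ nodal fibres of $\Gamma_m\to C_m$ (substitute $e(T)=2e(V)+(n+1)(-n)^n$ and the two expressions agree), so the final arithmetic matches.

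Part (i), however, has a genuine gap. The van Kampen decomposition attached to the Clemens map $\gamma\colon X\to X_0$ is
$\pi_1(X)\simeq \pi_1(X_1')\ast_{\pi_1(\tilde X_{12})}\pi_1(X_2')$,
where $X_i'=X_i\setminus X_{12}$ and $\tilde X_{12}=\gamma^{-1}(X_{12})$ is the circle-bundle piece; the circle fibre maps to the meridian loop of the divisor $X_{12}$ in each complement. So the input needed is simple connectivity of the \emph{complements} $X_i\setminus X_{12}$, which is strictly stronger than $\pi_1(X_i)=1$: deleting a divisor creates meridian loops that need not die (already $\pi_1(\bP^2\setminus Q)=\bZ/2$ for a smooth conic $Q$, even though $\pi_1(\bP^2)=1$), and in the amalgamated product the two meridians are identified with each other via the circle fibre, so they can survive into $\pi_1(X)$. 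Hence your claim that ``simple connectivity of $X_1,X_2$ together with connectivity of $X_{12}$ forces $\pi_1(X)=1$'' is not a valid implication, and the verification it replaces is exactly the nontrivial point. The paper supplies the missing geometric input: a fibre $C\cong\bP^1$ of $T\to\bP^n$ over a point of the blow-up centre is a \emph{section} of $T\to\bP^1$, so the rational curve $\{p\}\times C\subset X_2$ meets $X_{12}$ transversally in exactly one point; consequently the Gysin map $H_2(X_2,\bZ)\to H_0(X_{12},\bZ)$ is surjective, giving $H_1(X_2',\bZ)=0$ and in fact $\pi_1(X_2')=\{1\}$ (the meridian bounds a disc inside this rational curve), and similarly for $X_1'$. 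Producing such a cycle of intersection number one with the double locus is special to this construction and is absent from your argument; without it, part (i) does not go through.
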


\begin{proof}
\noindent(i) We show this by following the proof of \cite[Proposition 3.10]{Hashimoto:aa}. 


As in \cite[Proposition 3.10]{Hashimoto:aa}, we see that 
\begin{equation}\label{eq:pi1Xformula}
\pi_1(X) \simeq \pi_1(X_1') \ast_{\pi_1(\tilde{X}_{12})} \pi_1(X_2'), 
\end{equation}
where $X_i':= X_i \setminus X_{12}$ for $i=1,2$ and  $\tilde{X}_{12}:= \gamma^{-1}(X_{12})$ for the Clemens map $\gamma \colon X \rightarrow X_0$. 
We see that $\pi_1(X_2') = \{1 \}$ by the Gysin exact sequence 
\[
H_2(X_2, \bZ) \rightarrow H_0(X_{12}, \bZ) \rightarrow H_1(X_2', \bZ) \rightarrow H_1(X_2, \bZ). 
\]
Indeed, for a section $C \subset T$ of the fiber space $T \rightarrow \bP^1$, the class $[\{p\} \times C] \in H_2(X_2, \bZ)$ is sent to a generator of $H_0(X_{12})$. For example, a fiber of $T \rightarrow \bP^n$ over the exceptional locus can be taken as $C$. 
We also see that $\pi_1(X_1') = \{ 1\}$ as in \cite{Hashimoto:aa} (In fact, the argument is easier since $\pi_1(X_2')= \{1 \}$). 
By these and (\ref{eq:pi1Xformula}), we see that $\pi_1(X) = \{ 1\}$. 

\vspace{2mm}

\noindent(ii) As in the proof of  \cite[Claim 3.7]{Hashimoto:aa}, 
we see that $$e(X) = e(X_1) + e(X_2) - 2e(X_{12})$$ by the Mayer-Vietoris exact sequence and the Clemens map. 
Note that $T \rightarrow \bP^1$ has singular fibers with only one nodes over points $q_1, \ldots , q_{d_n} \in \bP^1$, 
where 
$$d_n:=  (n+1)n^n$$ is the degree of the discriminant hypersurface in $|\cO_{\bP^n}(n+1)|$ (cf. \cite[Lemma 2.1]{MR2569617}). 
Note also that a smooth hypersurface $H_{n+1} \subset \bP^n$ has the Euler number 
\[
e(H_{n+1}) = (n+1) + \frac{(-n)^{n+1} -1}{n+1}= \frac{(-n)^{n+1}+n^2 + 2n}{n+1} =: \sigma_n. 
\]
Thus we compute 
\[
e(T)= e(\bP^1) e(H_{n+1}) + d_n(-1)^n = 2 \sigma_n + \delta_{n}, 
\] 
where we put $\delta_{n}:= (-1)^{n} d_{n}$. 
We compute that $$e(X_2) = e(\bP^2 \times T) = e(\bP^2) e(T)=3 (2\sigma_n+\delta_n) = 6\sigma_n + 3\delta_n. $$
To compute $e(X_1)$, note that $X_1 \rightarrow \bP^2 \times T$ is the blow-up along $F_1, \ldots, F_m$ and (the strict transform of) $\Gamma_m$. 
Note that $e(F_i) =0$ since $F_i$ is a product of an elliptic curve and a Calabi-Yau hypersurface. 
Thus we see that 
$$
e(X_1) = e(\bP^2 \times T) + e(\Gamma_m) = 3 (2\sigma_n + \delta_n) + e(\Gamma_m). 
$$
Note also that $\Gamma_m \rightarrow C_m$ is a Calabi-Yau fibration and the discriminant locus consists of $18 d_n$ points since 
$$
C_m \cdot (-d_n K_S) = d_n(-K_S \cdot (3H+3\phi^*H)) =18 d_n.  
$$
Indeed, $\Gamma_m \rightarrow C_m$ is singular at the intersection of $C_m$ and the discriminant locus $D_S \subset S$ of $X_{12} \rightarrow S$, 
and $D_S$ consists of smooth fibers over the $d_n$ points which is the discriminant locus of $T \rightarrow \bP^1$. We also check 
\[
e(C_m)=-(K_S+C_m)\cdot C_m = -18(27m^2 -2m +5)=: \gamma_m. 
\]
Thus we compute 
$$e(\Gamma_m)  = e(C_m) \cdot e(H_{n+1}) + 18d_n (-1)^n= \gamma_m \sigma_n +18\delta_n$$ and 
\[
e(X_1) = (6 \sigma_n + 3 \delta_n) + (\gamma_m \sigma_n +18 \delta_n) = (\gamma_m +6) \sigma_n + 21 \delta_n. 
\]
To compute $e(X_{12})$, note that $X_{12} \rightarrow \bP^1$ has a fiber with non-zero Euler number only at the discriminant locus 
$p_1, \ldots, p_{12}$ of $S \rightarrow \bP^1$. 
Then we compute 
\[
e(X_{12})= 12 (1 \cdot e(H_{n+1})) = 12 \sigma_n. 
\]
By these, we obtain 
$$e(X) = ((\gamma_m+6)\sigma_n + 21 \delta_n)+ (6 \sigma_n + 3 \delta_n) - 24 \sigma_n =   (\gamma_m-12) \sigma_n + 24 \delta_n.$$
\end{proof}

\begin{rem}
The above implies that the Euler number can be arbitrarily negative when $n$ is odd and can be arbitrarily positive when $n$ is even (except $n=2$). 
If $n=2$, then we compute $e(X)= 288$. We check that the formula (\ref{eq:Euler}) also holds when $n=1$. 
\end{rem}

\begin{rem}
We see that the Hodge to de Rham spectral sequence on $X$  degenerates at $E_1$ (cf. \cite[Section 4]{MR894379}, \cite[Corollary 11.24]{MR2393625}). 
It would be interesting whether our examples $X(m)$ satisfies the $\partial \bar{\partial}$-lemma and the hard Lefschetz property (cf.\cite{MR4085665},   \cite{MR3784517}). We hope to seek these  elsewhere. 
\end{rem}

%

\section*{Acknowledgement}
The author is grateful to Kenji Hashimoto for useful discussions. 
He is also grateful to Nam-Hoon Lee for sending his manuscript and useful information. 
He thanks Jim Bryan, Robert Friedman and the anonymous referees for valuable comments. 
This work was partially supported by JSPS KAKENHI Grant Numbers JP17H06127, JP19K14509.

\bibliographystyle{amsalpha}
\bibliography{sanobibs-bddlogcy}

\def\cprime{$'$} \def\cprime{$'$}
\providecommand{\bysame}{\leavevmode\hbox to3em{\hrulefill}\thinspace}
\providecommand{\MR}{\relax\ifhmode\unskip\space\fi MR }
\providecommand{\MRhref}[2]{%
  \href{http://www.ams.org/mathscinet-getitem?mr=#1}{#2}
}
\providecommand{\href}[2]{#2}
\begin{thebibliography}{CLM19}

\bibitem[Cle77]{MR444662}
C.~H. Clemens, \emph{Degeneration of {K}\"{a}hler manifolds}, Duke Math. J.
  \textbf{44} (1977), no.~2, 215--290. \MR{444662}

\bibitem[CLM19]{Chan:2019vv}
Kwokwai Chan, Naichung~Conan Leung, and Ziming~Nikolas Ma, \emph{Geometry of
  the {M}aurer-{C}artan equation near degenerate {C}alabi-{Y}au varieties},
  https://arxiv.org/pdf/1902.11174.pdf (2019).

\bibitem[DI87]{MR894379}
Pierre Deligne and Luc Illusie, \emph{Rel{\`e}vements modulo {$p^2$} et
  d{\'e}composition du complexe de de {R}ham}, Invent. Math. \textbf{89}
  (1987), no.~2, 247--270. \MR{894379 (88j:14029)}

\bibitem[FFR19]{Felten:2019ve}
Simon Felten, Matej Filip, and Helge Ruddat, \emph{Smoothing toroidal crossing
  spaces}, https://arxiv.org/pdf/1908.11235.pdf (2019).

\bibitem[FLY12]{MR2891478}
Jixiang Fu, Jun Li, and Shing-Tung Yau, \emph{Balanced metrics on
  non-{K}\"{a}hler {C}alabi-{Y}au threefolds}, J. Differential Geom.
  \textbf{90} (2012), no.~1, 81--129. \MR{2891478}

\bibitem[FP20]{Felten:2020vm}
Simon Felten and Andrea Petracci, \emph{The logarithmic
  {B}ogomolov-{T}ian-{T}odorov theorem}, https://arxiv.org/pdf/2010.13656.pdf
  (2020).

\bibitem[Fri83]{MR707162}
Robert Friedman, \emph{Global smoothings of varieties with normal crossings},
  Ann. of Math. (2) \textbf{118} (1983), no.~1, 75--114. \MR{707162
  (85g:32029)}

\bibitem[Fri91]{MR1141199}
\bysame, \emph{On threefolds with trivial canonical bundle}, Complex geometry
  and {L}ie theory ({S}undance, {UT}, 1989), Proc. Sympos. Pure Math., vol.~53,
  Amer. Math. Soc., Providence, RI, 1991, pp.~103--134. \MR{1141199}

\bibitem[Fri19]{MR4085665}
\bysame, \emph{The {$\partial\overline\partial$}-lemma for general {C}lemens
  manifolds}, Pure Appl. Math. Q. \textbf{15} (2019), no.~4, 1001--1028.
  \MR{4085665}

\bibitem[GM93]{MR1240604}
Antonella Grassi and David~R. Morrison, \emph{Automorphisms and the
  {K}\"{a}hler cone of certain {C}alabi-{Y}au manifolds}, Duke Math. J.
  \textbf{71} (1993), no.~3, 831--838. \MR{1240604}

\bibitem[HS19]{Hashimoto:aa}
Kenji Hashimoto and Taro Sano, \emph{Examples of non-{K}{\"a}hler
  {C}alabi-{Y}au 3-folds with arbitrarily large $b_2$},
  https://arxiv.org/abs/1902.01027 (2019).

\bibitem[HSS98]{MR1672045}
Shinobu Hosono, Masa-Hiko Saito, and Jan Stienstra, \emph{On the mirror
  symmetry conjecture for {S}choen's {C}alabi-{Y}au {$3$}-folds}, Integrable
  systems and algebraic geometry ({K}obe/{K}yoto, 1997), World Sci. Publ.,
  River Edge, NJ, 1998, pp.~194--235. \MR{1672045}

\bibitem[KN94]{MR1296351}
Yujiro Kawamata and Yoshinori Namikawa, \emph{Logarithmic deformations of
  normal crossing varieties and smoothing of degenerate {C}alabi-{Y}au
  varieties}, Invent. Math. \textbf{118} (1994), no.~3, 395--409. \MR{1296351}

\bibitem[L\"09]{MR2569617}
Michael L\"{o}nne, \emph{Fundamental groups of projective discriminant
  complements}, Duke Math. J. \textbf{150} (2009), no.~2, 357--405.
  \MR{2569617}

\bibitem[Lee10]{MR2658406}
Nam-Hoon Lee, \emph{Calabi-{Y}au construction by smoothing normal crossing
  varieties}, Internat. J. Math. \textbf{21} (2010), no.~6, 701--725.
  \MR{2658406}

\bibitem[Lee21]{Lee:2021aa}
Nam-Hoon Lee, \emph{An example of non-{K}\"{a}hler {C}alabi-{Y}au fourfold},
  https://arxiv.org/pdf/2102.12656.pdf (2021).

\bibitem[LT96]{MR1410077}
P.~Lu and G.~Tian, \emph{The complex structures on connected sums of
  {$S^3\times S^3$}}, Manifolds and geometry ({P}isa, 1993), Sympos. Math.,
  XXXVI, Cambridge Univ. Press, Cambridge, 1996, pp.~284--293. \MR{1410077}

\bibitem[MOF]{MOF2020}
https://mathoverflow.net/questions/352878/calabi-yau-threefold-with-an-automorphism-of-infinite-order.

\bibitem[Nam91]{MR1093334}
Yoshinori Namikawa, \emph{On the birational structure of certain {C}alabi-{Y}au
  threefolds}, J. Math. Kyoto Univ. \textbf{31} (1991), no.~1, 151--164.
  \MR{1093334}

\bibitem[PS08]{MR2393625}
Chris A.~M. Peters and Joseph H.~M. Steenbrink, \emph{Mixed {H}odge
  structures}, Ergebnisse der Mathematik und ihrer Grenzgebiete. 3. Folge. A
  Series of Modern Surveys in Mathematics [Results in Mathematics and Related
  Areas. 3rd Series. A Series of Modern Surveys in Mathematics], vol.~52,
  Springer-Verlag, Berlin, 2008. \MR{2393625 (2009c:14018)}

\bibitem[QW18]{MR3784517}
Lizhen Qin and Botong Wang, \emph{A family of compact complex and symplectic
  {C}alabi-{Y}au manifolds that are non-{K}\"{a}hler}, Geom. Topol. \textbf{22}
  (2018), no.~4, 2115--2144. \MR{3784517}

\bibitem[Rei87]{MR909231}
Miles Reid, \emph{The moduli space of {$3$}-folds with {$K=0$} may nevertheless
  be irreducible}, Math. Ann. \textbf{278} (1987), no.~1-4, 329--334.
  \MR{909231}

\bibitem[Sch88]{MR923487}
Chad Schoen, \emph{On fiber products of rational elliptic surfaces with
  section}, Math. Z. \textbf{197} (1988), no.~2, 177--199. \MR{923487}

\bibitem[Tos15]{MR3372471}
Valentino Tosatti, \emph{Non-{K}\"{a}hler {C}alabi-{Y}au manifolds}, Analysis,
  complex geometry, and mathematical physics: in honor of {D}uong {H}. {P}hong,
  Contemp. Math., vol. 644, Amer. Math. Soc., Providence, RI, 2015,
  pp.~261--277. \MR{3372471}

\bibitem[Tot08]{MR2457523}
Burt Totaro, \emph{Hilbert's 14th problem over finite fields and a conjecture
  on the cone of curves}, Compos. Math. \textbf{144} (2008), no.~5, 1176--1198.
  \MR{2457523}

\end{thebibliography}

\end{document}